\newtheorem{theorem}{Theorem}[section]
\newtheorem{lemma}{Lemma}[section]
\newtheorem{corollary}{Corollary}[section]
\theoremstyle{definition}
\begin{document}
    \title[On multiplicative subgroups in division rings]{On multiplicative subgroups in division rings}
    \author[Bui Xuan Hai]{Bui Xuan Hai}\author[Nguyen Anh Tu]{Nguyen Anh Tu}
    \address{University of Science, VNU-HCMC, Vietnam}
   \email{bxhai@hcmus.edu.vn;  anhtu812@yahoo.com}
\address{Faculty of Mathematics and Computer Science, University of Science, VNU-HCMC, 227 Nguyen Van Cu Str., Dist. 5, Ho Chi Minh City, Vietnam.}

\keywords{ maximal subgroups, subnormal subgroups,  $FC$-elements. \\
\protect \indent 2010 {\it Mathematics Subject Classification.} 16K20.}

 \maketitle

 \begin{abstract} Let $D$ be a division ring. In this paper, we investigate properties of  subgroups  of an arbitrary  subnormal subgroup of the multiplicative group $D^*$ of $D$. The new obtained results  generalize some previous results on subgroups of $D^*$.   
\end{abstract}
\section{Introduction}        
Let $D$ be a division ring with center $F$ and  the multiplicative group  $D^*$. The  subgroup structure  of $D^*$ is one of subjects which attract the attention  of many authors around the world (see, for example, \cite{maxn}, \cite{exist}, \cite{nilpotent}-\cite{hai-thin2}, \cite{her3}, \cite{her4}, \cite{huz}, \cite{exist1}, \cite{fg}, \cite{free},  \cite{scott1}, \cite{stuth} etc.).   Some well-known classical results show that subnormal subgroups of $D^*$ behave as $D^*$ in several ways. Thus, from the earlier result of Scott  \cite{scott1}, we see that there does not exist any abelian non-central normal subgroup in a non-commutative division ring. This is a particular case of the most important result concerning subnormal subgroup structure  obtained later  by Stuth \cite{stuth} asserting  that every soluble subnormal subgroup of $D^*$ is central. In \cite{her1}, Herstein  proved that $x^{D^*}$ is infinite for  every non-central element $x$ of $D^*$. This result was extended by Scott in \cite{scott1}, where he proved that $|x^{D^*}|=|D|$. Moreover, in this work, Scott showed that if $G$ is a non-central subnormal subgroup, then for every non-central element $x$ of $D$, the division subring generated by $x^G$ is $D$. There are also some other results showing that subnormal subgroups of $D^*$  are, roughly speaking,  `` big". For further information, we  refer to \cite{maxn}, \cite{hai-thin1}-\cite{hai-thin2},  \cite{her3},  \cite{her4}, \cite{huz}, \cite{fg} and  references therein.

In this paper, we study  subgroups of an arbitrary  subnormal subgroup of $D^*$, especially its maximal subgroups. We refer to  \cite{exist}, \cite{exist2}, \cite{exist1}, and references therein for information on the existence of maximal subgroups in non-commutative division rings. Recall that in \cite{maxn}, \cite{nilpotent}, \cite{free}, Akbari et al., and Mahdavi-Hezavehi study maximal subgroups of $D^*$  and many nice properties of such subgroups were obtained. In the present paper, studying  maximal subgroups  of $G$, we get in various cases  the similar results for these subgroups as the results obtained previously in \cite{maxn}, \cite{nilpotent}, \cite{free} for maximal subgroups of $D^*$. 

Throughout this paper, for a ring $R$ with identity $1\neq 0$, the symbol  $R^* $ denotes the group of all units in $R$. If $S$ is  a non-empty subset  of a division ring $D$,  then $F[S]$ and  $F(S)$ denote respectively the subring and the division subring of $D$ generated by the set $F\cup S$.  For a given group $G$ and its subgroup $H$, we denote the derived group of $G$ and the core of $H$ in $G$  respectively by $G'$ and $H_G:=\bigcap_{x\in G} xHx^{-1}$. Also, if $x\in G,$ then $x^H:=\{x^h=hxh^{-1}, h\in H\}$. If $x^G$ is finite, then we say that $x$ is  an {\em $FC$-element} of $G$. The set of all $FC$-elements of $G$ is called the {\em $FC$-center} of $G$. If $x, y\in G$, then  $[x,y]:=xyx^{-1}y^{-1}$, and $[H,K]$ is the subgroup of $G$ generated by all elements $[h, k],  h\in H,\,k\in K$. An element $x\in D$ is said to be {\em radical} over $F$ if there exists a positive integer $n(x)$ depending on $x$ such that $x^{n(x)}\in F$. A non-empty subset $S$ of $D$ is {\em radical} over $F$ if every element of $S$ is radical over $F$.  If $A$ is a ring or a group, then the symbol $Z(A)$ denotes the center of $A$. All other notation and symbols in this paper are standard and one can find, for example, in \cite{lam}, \cite{rob}, \cite{scott2}, \cite{ilg}, \cite{ls}.

\section{Algebraicity over a division subring}

Let $D$ be a division ring with center $F$. Assume that $A$ is a conjugacy class of $D$ which is algebraic over $F$ with the minimal polynomial $f(t)\in F[t]$ of degree $n$. Then, there exist $a_1, \ldots, a_n\in A$ such that
$$f(t)=(t-a_1)\ldots(t-a_n)\in D[t].$$

This factorization theorem which is due to Wedderburn,  plays an important role in the theory of polynomials over a division ring and its applications in the study of the structure of division rings are well-known.  In this section, we give some analogue of this theorem which will be used for our study in next sections of the paper. 

Let $K\subseteq D$ be a pair of  division rings and $\alpha \in D$. We say that $\alpha$ is {\em right (resp. left) algebraic} over $K$ if there exists some non-zero polynomial $f(t)\in K[t]$ having $\alpha$ as a right (resp. left) root. A monic polynomial from $K[t]$ with smallest degree having $\alpha$ as a right (resp. left) root is called a {\em right (resp. left) minimal polynomial} of $\alpha$ over $K$. Since throughout this paper we consider only right roots and the right algebraicity, we shall always omit the prefix ``right". The minimal polynomial of $\alpha$ over $K$ is unique, but, it may not be irreducible as the following example shows: 

Let $\mathbb{H}$ be the division ring of real quaternions. Then, $f(t)=t^2+1\in \mathbb{C}[t]$ is the minimal polynomial of $j$ and $k$ over $\mathbb{C}$.  Here, $\{1, i, j, k\}$ is the standard basis of $\mathbb{H}$ over $\mathbb{R}$.

We note that the proof of the following lemma is a simple modification of  the proof of Lemma (16.5) in \cite{lam}. 

\begin{lemma}\label{lem:2.1}
Let $R$ be a ring. Assume that $D$ is a division subring of $R$ and $M$ is a subgroup of $R^*$ normalizing $D^*$. If $x\in D^*$ is algebraic over $K:=C_D(M)$ with the minimal polynomial $f(t)\in K[t]$, then a polynomial $h(t)\in D[t]$ vanishes on $x^M$ if and only if $h(t)\in D[t] f(t)$.
\end{lemma}
\begin{proof} Note that $K=C_D(M):=\{d\in D~\vert~ dm=md, \forall m\in M\}$ may not be a field. We claim that if $h(t)\in D[t]\setminus\{0\}$ vanishes on $x^M$ then ${\rm deg} h\geq {\rm deg} f$. Assume that this conclusion is false. Then, we can take a polynomial 
$$h(t)=t^m+d_1t^{m-1}+\cdots+d_m$$
with the smallest $m<{\rm deg} f$ such that $h(x^M)=0$. Clearly, we have $h(t)\not\in K[t]$. So, there exists some $d_i\not\in K$, and we can pick an element $e\in M$ such that $ed_i\neq d_ie$. Since $M$ normalizes $D^*$, for any $b\in D$, we have $b':=ebe^{-1}\in D$. For any $a\in x^M$, we can conjugate the equation
$$a^m+d_1a^{m-1}+\cdots+d_m=0$$
by element $e$ to get
$$(a')^m+d'_1(a')^{m-1}+\cdots +d'_m=0.$$
On the other hand, we also have
$$(a')^m+d_1(a')^{m-1}+\cdots + d_m=0.$$
It follows that the nonzero polynomial $\sum_{j=1}^m (d_j-d'_j)t^{m-j}$ vanishes on $ex^Me^{-1}=x^M$, and  its degree is less than $m$, a contradiction. Since the coefficients of $f(t)$ commute with elements from $M$, it is easy to see that $h(t)$ vanishes on $x^M$ for all $h(t)\in D[t]f(t)$. 

Conversely, assume that $h(t)\in D[t]\setminus \{0\}$ and $h(x^M)=0$. By the division algorithm, we can write $h(t)=q(t)f(t)+r(t)$, where $r(t)=0$ or ${\rm deg} r<{\rm deg} f$. Since $h(x^M)=0$ and $f(x^M)=0$, it follows that $r(x^M)=0$. By the claim above, we have $r(t)=0$. So, $h(t)\in D[t]f(t)$, and the proof is now complete.
\end{proof}
Now, using Lemma \ref{lem:2.1}, we get easily the following theorem we need in the next study.
 
\begin{theorem}\label{thm:2.2}
Let $R$ be a ring. Assume that  $D$ is a division subring of $R$ and $M$ is a subgroup of $R^*$ normalizing $D^*$. If $x\in D^*$ is algebraic over $K:=C_D(M)$ with the minimal polynomial $f(t)$ of degree n, then there exist $x_1,\ldots,x_{n-1}\in x^{MD^*}$ such that $$f(t)=(t-x_{n-1})\cdots (t-x_1)(t-x)\in D[t].$$
\end{theorem}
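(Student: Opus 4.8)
The plan is to imitate Wedderburn's original argument, peeling off one linear factor at a time, but with the roots now drawn from the larger conjugacy class $x^{MD^*}$ while the vanishing set remains $x^M$. The essential tool is the standard evaluation rule for a product of polynomials over a division ring (see \cite{lam}): if $h=pq$ in $D[t]$ and $a\in D$, then $h(a)=0$ whenever $q(a)=0$, and otherwise $h(a)=p\bigl(q(a)\,a\,q(a)^{-1}\bigr)\,q(a)$. I also record at the outset that $f$ itself vanishes on $x^M$: since $f=1\cdot f\in D[t]f(t)$, Lemma \ref{lem:2.1} gives $f(a)=0$ for every $a\in x^M$. Finally, because $M$ normalizes $D^*$, the product $MD^*$ is a subgroup of $R^*$ containing $D^*$, so $x^{MD^*}$ is well defined and closed under conjugation by elements of $D^*$.

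First I would set $x_0=x$ and factor $f(t)=g_1(t)(t-x)$, which is possible since $x$ is a right root of $f$; here $g_1$ is monic of degree $n-1$. I would then run an induction producing, for $k=1,\dots,n-1$, a factorization $f(t)=g_k(t)\,h_k(t)$ in which $h_k(t)=(t-x_{k-1})\cdots(t-x_0)$ is a product of $k$ linear factors whose roots $x_0,\dots,x_{k-1}$ all lie in $x^{MD^*}$, and in which $g_k$ is monic of degree $n-k$. The inductive step is to locate a root of $g_k$ inside $x^{MD^*}$ and peel it off.

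The heart of the argument is the choice of the next root. I claim there is some $m\in M$ with $h_k(x^m)\neq 0$. Indeed, if $h_k$ vanished on all of $x^M$, then Lemma \ref{lem:2.1} would force $h_k\in D[t]f(t)$; but every nonzero element of $D[t]f(t)$ has degree at least $n$, whereas $h_k$ is monic of degree $k<n$, a contradiction. Fixing such an $m$ and applying the evaluation rule to $f=g_k h_k$ at $a=x^m$, the relation $0=f(x^m)=g_k(w)\,h_k(x^m)$ with $h_k(x^m)\in D^*$ yields $g_k(w)=0$, where $w=h_k(x^m)\,x^m\,h_k(x^m)^{-1}$. Writing $x^m=mxm^{-1}$ shows $w=(h_k(x^m)m)\,x\,(h_k(x^m)m)^{-1}$ with $h_k(x^m)m\in MD^*$, so $w\in x^{MD^*}$. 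Setting $x_k=w$ and factoring $g_k(t)=g_{k+1}(t)(t-x_k)$ advances the induction.

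Carrying this through to $k=n-1$ leaves $g_{n-1}$ monic of degree $1$ with a root $x_{n-1}\in x^{MD^*}$, hence $g_{n-1}(t)=t-x_{n-1}$, and the accumulated factorization is exactly $f(t)=(t-x_{n-1})\cdots(t-x_1)(t-x)$. I expect the main obstacle to be precisely the degree/minimality step that guarantees a suitable $m$: it is what keeps the induction from stalling, and it is the only place where the defining property of $f$ (via Lemma \ref{lem:2.1}) is genuinely used. Everything else is bookkeeping with the evaluation rule together with the observation that conjugating an $M$-conjugate of $x$ by a nonzero element of $D$ keeps it inside $x^{MD^*}$.
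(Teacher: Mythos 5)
Your proof is correct and takes essentially the same approach as the paper: both arguments peel off linear factors using the evaluation rule for products in $D[t]$ (Lam's (16.3)) and use Lemma \ref{lem:2.1} to control vanishing on $x^M$, with the same conjugacy bookkeeping placing the new roots in $x^{MD^*}$. The only difference is organizational — the paper fixes a factorization with the number of linear factors $r$ maximal and derives a contradiction, whereas you run an explicit induction, invoking Lemma \ref{lem:2.1} contrapositively at each step to find a point of $x^M$ where the current tail does not vanish; the mathematical content is identical.
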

\begin{proof} Take a factorization 
$$f(t)=g(t)(t-x_r)\cdots (t-x_1)(t-x)$$
with $g(t)\in D[t]$, $x_1,\ldots ,x_r\in x^{MD^*}$, where $r$ is chosen as large as possible. We claim that $h(t):=(t-x_r)\cdots (t-x_1)(t-x)$ vanishes on $x^M$. Indeed, consider an arbitrary element $y\in x^M$. If $h(y)\neq 0$, then by \cite[(16.3), p. 263]{lam}, $g(x_{r+1})=0$, where $x_{r+1}=aya^{-1}\in x^{MD^*}, a=h(y)$. It follows that $g(t)=g_1(t)(t-x_{r+1})$ for some $g_1(t)\in D[t]$, and so 
$$f(t)=g_1(t)(t-x_{r+1})(t-x_r)\cdots (t-x_1)(t-x).$$
Since this contradicts to  the choice of $r$, we must have  $h(x^M)=0$.  So, in view of  Lemma~\ref{lem:2.1}, $r=n-1$. Hence,  $f(t)=(t-x_{n-1})\cdots (t-x_1)(t-x)$ as  required.
\end{proof}
We notice that by taking $R=D$ and $M=D^*$ in Theorem~\ref{thm:2.2}, we get Wedderburn's factorization theorem. 

In view of   Theorem \ref{thm:2.2}  and \cite[(16.3), p. 263]{lam},  the following corollary is immediate.

\begin{corollary}\label{cor:2.3}
Let $R$ be a ring. Assume that  $D$ is a division subring of $R$ and $M$ is a subgroup of $R^*$ normalizing $D^*$. If $x\in D^*$ is algebraic over $K:=C_D(M)$ with the minimal polynomial $f(t)$ and $y$ is a  root of $f(t)$ in $D$, then $y\in x^{MD^*}$.
\end{corollary}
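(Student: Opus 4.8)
The plan is to evaluate the factorization supplied by Theorem \ref{thm:2.2} at $y$ and to exploit the product formula for evaluating a product of polynomials over a division ring, as recorded in [15, (16.3), p. 263]. Recall that formula: if $p(t)=q(t)r(t)$ in $D[t]$ and $d\in D$, then $p(d)=0$ whenever $r(d)=0$, while $p(d)=q(cdc^{-1})\,c$ with $c=r(d)$ whenever $r(d)=c\neq 0$; in particular, the point at which $q$ gets evaluated is a $D^*$-conjugate of $d$. Before starting, I would record two structural facts. Since $M$ normalizes $D^*$, the set $MD^*$ is a subgroup of $R^*$ containing $D^*$, so $x^{MD^*}$ is a genuine conjugacy class and is in particular stable under conjugation by elements of $D^*$. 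Consequently, it suffices to show that $y$ is conjugate, via some element of $D^*$, to one of the roots $x,x_1,\dots,x_{n-1}$ occurring in the factorization $f(t)=(t-x_{n-1})\cdots(t-x_1)(t-x)$ of Theorem \ref{thm:2.2}, since each of these already lies in $x^{MD^*}$.

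The core of the argument is an iterated peeling of linear factors from the right. Setting $y_1=y$, I would evaluate the rightmost factor $(t-x)$ at $y_1$. If the value is $0$ then $y_1=x$ and we are done. Otherwise $c_1=y_1-x$ is a nonzero element of the division ring $D$, hence a unit, and the product formula shows that the remaining left factor $(t-x_{n-1})\cdots(t-x_1)$ vanishes at $y_2:=c_1y_1c_1^{-1}$, which is a $D^*$-conjugate of $y$. Repeating this on the shorter product, at each stage either the current rightmost factor $(t-x_i)$ vanishes at the current point $y_i$, forcing $y_i=x_i$, or I pass to a new $D^*$-conjugate $y_{i+1}=c_iy_ic_i^{-1}$ of $y$ (with $c_i\in D^*$) and the degree of the surviving product drops by one.

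Because $f(y)=0$ and the degree strictly decreases at each step, the procedure must terminate with some factor vanishing; in the extreme case it reaches the single factor $(t-x_{n-1})$, whose vanishing forces $y_n=x_{n-1}$. In every case the terminal point is a $D^*$-conjugate of $y$ equal to one of $x,x_1,\dots,x_{n-1}\in x^{MD^*}$, whence $y\in x^{MD^*}$ by the stability of the class under $D^*$-conjugation noted above. The only point demanding care is the bookkeeping of the conjugating units: one must verify that each intermediate value $c_i$ is nonzero (so that it is invertible in $D$ and the product formula applies in its nontrivial form), and then compose the successive conjugations to confirm that the terminal point is indeed a single $D^*$-conjugate of the original $y$; this is routine but is where a careless index could slip.
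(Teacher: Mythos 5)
Your proposal is correct and is essentially the paper's own argument: the paper derives Corollary \ref{cor:2.3} directly from the factorization of Theorem \ref{thm:2.2} together with the evaluation formula [15, (16.3), p.~263], which is exactly the peeling procedure you carry out, with the closure of $x^{MD^*}$ under $D^*$-conjugation (since $MD^*$ is a group) supplying the final step. You have simply written out in full the details the paper leaves implicit.
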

\begin{corollary}\label{cor:2.4}
Let $R$ be a ring. Assume that $D$ is a division subring of $R$, and  $M$ is a subgroup of $R^*$ such that $D^*\trianglelefteq M$. If $x\in D^*$ is algebraic over $K:=C_D(M)$ with the minimal polynomial $f(t)$ of degree n, then $K$ is contained in the center of $D$ and there exists an element $c_x\in [M,x]\cap K(x)$ such that $x^n=N_{K(x)/K}(x)c_x$ with $N_{K(x)/K}(c_x)=1$, where $N_{K(x)/K}$ is the norm of $K(x)$ to $K$.
\end{corollary}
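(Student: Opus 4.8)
The plan is to reduce everything to the classical commutative situation and then to track how the Wedderburn-type factorization of Theorem~\ref{thm:2.2} expresses the field norm as an ordered product of $M$-conjugates of $x$. First I would observe that the hypothesis $D^*\trianglelefteq M$ forces $D^*\subseteq M$, so that $K=C_D(M)\subseteq C_D(D^*)=Z(D)=F$; this gives the first assertion that $K$ is central. Consequently $K(x)=K[x]$ is a commutative subfield of $D$, and since $x$ commutes with the (central) coefficients of $f$, the polynomial $f$ is nothing but the usual minimal polynomial of $x$ over the field $K$. In particular $f$ is irreducible and $n=\deg f=[K(x):K]$, so $N_{K(x)/K}$ is the honest field norm, with $N_{K(x)/K}(a)=a^{n}$ for every $a\in K$.

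Next I would feed $f$ into Theorem~\ref{thm:2.2}. Because $D^*\subseteq M$ we have $MD^*=M$, so the theorem yields $f(t)=(t-x_{n-1})\cdots(t-x_1)(t-x)$ with each $x_i\in x^{M}$, say $x_i=m_ixm_i^{-1}$, $m_i\in M$. Evaluating at $t=0$ gives $f(0)=(-1)^{n}x_{n-1}\cdots x_1x$, while the field-norm formula gives $f(0)=(-1)^{n}N_{K(x)/K}(x)$; comparing, $N:=N_{K(x)/K}(x)=x_{n-1}\cdots x_1x$. I then set $c_x:=N^{-1}x^{n}$, so that $x^n=Nc_x$ as required (note $N\in K\subseteq F$ is central, so the order is immaterial). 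Two of the three demands on $c_x$ are now immediate: $c_x\in K(x)$ since $N,x\in K(x)$, and $N_{K(x)/K}(c_x)=N_{K(x)/K}(N)^{-1}N_{K(x)/K}(x)^{n}=N^{-n}N^{n}=1$.

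The remaining, genuinely delicate, point is to show $c_x=N^{-1}x^{n}\in[M,x]$. The natural idea is that modulo commutators every conjugate $x_i$ should collapse to $x$, turning the ordered product $x_{n-1}\cdots x_1x$ into $x^{n}$; the obstacle is that $[M,x]$ is a priori only a subgroup of $D^*$ and need not be normal in $M$, so the quotient in which one wants to argue must be set up with care. I would first check that $[M,x]$ is invariant under conjugation by $x$: on generators, $x[m,x]x^{-1}=[xmx^{-1},x]$ and $xmx^{-1}\in M$ because $x\in D^*\subseteq M$, so conjugation by $x^{\pm1}$ permutes the generators of $[M,x]$. Hence $H:=\langle x\rangle[M,x]$ is a group containing $[M,x]$ as a normal subgroup, and each $x_i=[m_i,x]x$ lies in $H$. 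In the quotient $H/[M,x]$, which is cyclic (generated by the image of $x$) hence abelian, every $x_i$ maps to the image of $x$; therefore $N=x_{n-1}\cdots x_1x$ maps to $x^{n}$, i.e. $N^{-1}x^{n}\in[M,x]$. This places $c_x$ in $[M,x]\cap K(x)$ and finishes the argument. I expect the verification that $[M,x]$ is $x$-invariant, together with the correct choice of the group $H$ in which to pass to a quotient, to be the main technical hurdle; the two norm computations and the identification of the constant term are routine.
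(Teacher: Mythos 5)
Your proposal is correct and follows essentially the same route as the paper's proof: both deduce $K\subseteq Z(D)$ from $D^*\subseteq M$, apply Theorem \ref{thm:2.2} (with $x^{MD^*}=x^M$) to write $b=N_{K(x)/K}(x)$ as a product of $n$ conjugates of $x$, set $c_x=b^{-1}x^n$, and verify $N_{K(x)/K}(c_x)=1$ identically. The only divergence is the final membership $c_x\in[M,x]$: the paper telescopes the product explicitly via the identity $[r,x]^{x^{i}}=[r^{x^{i}},x]$ (exhibiting $c_x^{-1}$ as a product of commutators), while you package the same underlying fact --- that $x\in M$ normalizes $[M,x]$ --- into a quotient argument in $\langle x\rangle[M,x]/[M,x]$; this is a cosmetic, if arguably cleaner, difference.
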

\begin{proof} Since $D^*\leq M$, $K$ is contained in the center of $D$ and $K(x)$ is a field. If $b=N_{K(x)/K}(x)$, then by Theorem \ref{thm:2.2}, we have $b=x^{r_1}\cdots x^{r_n}$ with $r_1,\ldots ,r_n\in M$. We can write $b$ in the following form:
$$b=[r_1,x][r_2,x]^x[r_3,x]^{x^2}\cdots [r_n,x]^{x^{n-1}}x^n.$$
Putting 
$$c^{-1}_x=[r_1,x][r_2^x,x][r_3^{x^2},x]\cdots [r_n^{x^{n-1}},x],$$
we have $c_x=b^{-1}x^n\in [M,x]\cap K(x)$. So, 
$$N_{K(x)/K}(c_x)=N_{K(x)/K}(b^{-1})N_{K(x)/K}(x)^n=b^{-n}b^n=1.$$
\end{proof}
This corollary can be reformulated in the following form which should be convenient  in some cases of its application. In particular, we shall use it in the proof of Theorem \ref{thm:3.6} in the next section. 

\begin{corollary}\label{cor:2.5}
Let $R$ be a ring. Assume that  $D$ is a division subring of $R$ and $M$ is a subgroup of $R^*$ normalizing $D^*$. If $x\in Z(D)^*\cap M$ is algebraic over $K:=C_D(M)$ with the minimal polynomial $f(t)$ of degree n, then there exists an element $c_x\in [M,x]\cap K(x)$ such that $x^n=N_{K(x)/K}(x)c_x$ with $N_{K(x)/K}(c_x)=1$.
\end{corollary}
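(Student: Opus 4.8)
The plan is to run the argument of Corollary~\ref{cor:2.4} almost verbatim, after first converting the two hypotheses that differ. I would begin by checking that $K(x)$ is a field, so that $N_{K(x)/K}$ is meaningful: since $x\in Z(D)$ it commutes with every element of $D$, in particular with the field $K$, and therefore the subring generated by $K$ and $x$ is commutative; being algebraic of degree $n$ over $K$, it is a field extension $K(x)/K$ of degree $n$. Thus $N_{K(x)/K}$ is defined, sends $K(x)$ into $K$, and satisfies $N_{K(x)/K}(a)=a^{n}$ for every $a\in K$. This is exactly the role played in Corollary~\ref{cor:2.4} by the hypothesis $D^*\trianglelefteq M$, which there forced $K\subseteq Z(D)$; here centrality of $x$ together with $K$ being a field does the same work.

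The key reduction is the observation that centrality of $x$ collapses the conjugacy class $x^{MD^*}$ to $x^{M}$. Indeed, for $w=md\in MD^*$ with $m\in M$ and $d\in D^*$ we have $dxd^{-1}=x$ because $x\in Z(D)$, hence $wxw^{-1}=mxm^{-1}$; so $x^{MD^*}=x^{M}$. Moreover, since $M$ normalizes $D^*$ it normalizes $D$ and hence $Z(D)$, so each conjugate $x^{m}=mxm^{-1}$ again lies in $Z(D)$. I would then apply Theorem~\ref{thm:2.2} to factor $f(t)=(t-x_{n-1})\cdots(t-x_1)(t-x)$ with every factor a conjugate in $x^{MD^*}=x^{M}$, and record all $n$ roots as $x^{m_1},\ldots,x^{m_n}$ with $m_i\in M$.

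Comparing the constant term of this factorization with that of the minimal polynomial gives $N_{K(x)/K}(x)=x^{m_1}x^{m_2}\cdots x^{m_n}$; denote this element by $b\in K$. Writing $x^{m_i}=[m_i,x]\,x$ and using that $x$ is central to pull every copy of $x$ to the right, I obtain $b=\bigl([m_1,x][m_2,x]\cdots[m_n,x]\bigr)x^{n}$. Setting $c_x:=b^{-1}x^{n}$ exhibits $c_x$ as a product of commutators $[m_i,x]^{-1}$ with $m_i,x\in M$, so $c_x\in[M,x]$; at the same time $c_x=b^{-1}x^{n}\in K(x)$ because $b\in K$ and $x^{n}\in K(x)$. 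Hence $c_x\in[M,x]\cap K(x)$ and $x^{n}=N_{K(x)/K}(x)\,c_x$. Finally $N_{K(x)/K}(c_x)=N_{K(x)/K}(b^{-1})\,N_{K(x)/K}(x^{n})=b^{-n}b^{n}=1$, completing the proof.

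The routine portions simply mirror Corollary~\ref{cor:2.4}, and in fact the commutator bookkeeping is lighter here because centrality of $x$ lets the copies of $x$ be gathered without the conjugating exponents $x^{i-1}$ that appear in that proof. The only genuinely new points to get right are the identity $x^{MD^*}=x^{M}$ (where centrality of $x$ substitutes for the hypothesis $D^*\trianglelefteq M$) and the sign/ordering check showing that the product of the linear factors from Theorem~\ref{thm:2.2} really recovers $N_{K(x)/K}(x)$. I expect that sign-and-ordering verification to be the main, though minor, obstacle.
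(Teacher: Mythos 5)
Your proof is correct and takes essentially the same route as the paper's: both apply Theorem \ref{thm:2.2}, use centrality of $x$ to strip the $D^*$-part from the conjugators, rewrite $N_{K(x)/K}(x)$ as a product of commutators $[m_i,x]$ times $x^n$, and set $c_x=b^{-1}x^n$. Your observation that every conjugate $x^m$ lies in $Z(D)$, so that all factors commute, merely streamlines the paper's exponent bookkeeping $[r_1,x][r_2,x]^x\cdots[r_n,x]^{x^{n-1}}x^n$.
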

\begin{proof} Let $b=N_{K(x)/K}(x)\in K$. By Theorem \ref{thm:2.2}, we have $b=x^{r_1d_1}\cdots x^{r_nd_n}$, with $r_1,\ldots ,r_n\in M$ and $d_1,\ldots ,d_n\in D^*$. We can write $b$ in the following form:
$$b=[r_1d_1,x][r_2d_2,x]^x[r_3d_3,x]^{x^2}\cdots [r_nd_n,x]^{x^{n-1}}x^n.$$
Since $x\in Z(D)^*$, we get
$$b=[r_1,x][r_2,x]^x[r_3,x]^{x^2}\cdots [r_n,x]^{x^{n-1}}x^n.$$
Putting 
$$c^{-1}_x=[r_1,x][r_2^x,x][r_3^{x^2},x]\cdots [r_n^{x^{n-1}},x],$$ 
we have $c_x=b^{-1}x^n\in [M,x]\cap K(x)$. So, 
$$N_{K(x)/K}(c_x)=N_{K(x)/K}(b^{-1})N_{K(x)/K}(x)^n=b^{-n}b^n=1.$$
\end{proof}

\section{Maximal subgroups of subnormal subgroups}

In this section, we describe the structure of maximal subgroups in an arbitrary subnormal subgroup of $D^*$, and we show their influence to the whole structure of $D$. In the first, we prove the  following useful lemmas we need for our further study.

\begin{lemma}\label{lem:3.1}
If $D $ is a division ring with center $F$ and $G$ is a soluble-by-locally finite subnormal subgroup of $D^*$, then $G\subseteq F$.
\end{lemma}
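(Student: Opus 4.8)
The plan is to show that the hypotheses force $G$ to be \emph{radical} over $F$, after which the conclusion follows from two classical centrality theorems for subnormal subgroups of a division ring: first, that a solvable subnormal subgroup of $D^*$ is central; and second, that a subnormal subgroup of $D^*$ which is radical over $F$ is central. We may assume at the outset that $D$ is noncommutative, since otherwise $G\le D^*=F^*$ and there is nothing to prove.

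By hypothesis there is a solvable normal subgroup $N\trianglelefteq G$ with $G/N$ locally finite. First I would record that $N$ is subnormal in $D^*$: indeed $N$ is normal in $G$ and $G$ is subnormal in $D^*$, and a normal subgroup of a subnormal subgroup is again subnormal. Since $N$ is solvable, the theorem on solvable subnormal subgroups gives $N\subseteq F$; in particular $N$ is central in $D^*$. Next, for an arbitrary $g\in G$ the coset $gN$ generates a finite cyclic subgroup of the locally finite group $G/N$, so $g^{m}\in N\subseteq F$ for some positive integer $m=m(g)$. Hence every element of $G$ is radical over $F$, i.e. $G$ itself is radical over $F$. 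Applying the second theorem to the subnormal, radical-over-$F$ subgroup $G$ then yields $G\subseteq F$, as required.

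The substance of the argument is the single observation that, once $N\subseteq F$ is known, local finiteness of $G/N$ promotes $G$ from merely solvable-by-locally-finite to radical over $F$, at which point the known centrality results finish the proof. Accordingly, the main point to get right is the subnormality bookkeeping (that $N$, being normal in the subnormal $G$, is itself subnormal in $D^*$, so that the solvable-subnormal centrality theorem genuinely applies) together with citing the two background theorems in the noncommutative case. I do not anticipate a serious obstacle beyond invoking these results; if the solvable-subnormal centrality statement is preferred in a self-contained form, it can in turn be reduced, by induction on the derived length, to the fact that an abelian normal subgroup of $D^*$ is central, which is a standard consequence of the Cartan--Brauer--Hua theorem.
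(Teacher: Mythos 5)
There is a genuine gap at the final step. The ``second classical centrality theorem'' you invoke --- that a subnormal subgroup of $D^*$ which is radical over $F$ is central --- is not a theorem. It is precisely Herstein's conjecture (posed in the very paper on multiplicative commutators that supplies the torsion result used below), and it is open in general; it has only been verified in special situations (torsion subgroups, uncountable center, certain classes of division rings). What Herstein actually proved is the weaker statement that a \emph{torsion} subnormal subgroup of $D^*$ is central, and your group $G$ need not be torsion: you only know each $g\in G$ has some power in $F^*$, i.e. that $GF^*/F^*$ is torsion, which is exactly the setting of the open conjecture. So your reduction, while correct as far as it goes ($N$ is subnormal in $D^*$, hence $N\subseteq F$, hence $G$ is radical over $F$), lands you on a statement you cannot cite.

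The reduction also throws away the information that rescues the argument: you know not merely that $G$ is radical over $F$, but that $G/N$ is \emph{locally finite} with $N$ central, hence $G/Z(G)$ is locally finite. This is how the paper proceeds: by a Schur-type theorem (if $G/Z(G)$ is locally finite then $G'$ is locally finite), the derived subgroup $G'$ is locally finite, hence torsion; $G'$ is a subnormal subgroup of $D^*$ (normal in the subnormal $G$), so Herstein's torsion theorem --- the one that really is a theorem --- gives $G'\subseteq F$. Then $G$ is solvable (indeed nilpotent of class at most $2$), and the solvable-subnormal centrality theorem, applied a second time, yields $G\subseteq F$. In short: apply the torsion theorem to $G'$, not a radical theorem to $G$. (A minor further caveat: your closing remark that the solvable case reduces ``by induction on the derived length'' to abelian \emph{normal} subgroups of $D^*$ and Cartan--Brauer--Hua is also too quick --- the terms of the derived series of a subnormal $G$ are only subnormal in $D^*$, not normal, and the inductive step needs Stuth's theorem; but since that centrality result is a legitimate citation, this is a presentational issue rather than a gap.)
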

\begin{proof} By assumption, there is a soluble normal subgroup $H$ of $G$ such that $G/H$ is locally finite. Consequently, $H$ is a soluble subnormal subgroup of $D^*$, and in view of \cite[14.4.4, p. 440]{scott2},  we have $H\subseteq F$. It follows that  $G/Z(G)$ is locally finite. Now, by \cite[Lemma~3]{maxn}, $G'$ is locally finite too. Therefore, $G'$ is a torsion subnormal subgroup of $D^*$, and by \cite[Theorem 8]{her3},  $G'\subseteq F$. Hence, $G$ is soluble, and again by \cite[14.4.4, p. 440]{scott2},  $G\subseteq F$.
\end{proof}

\begin{lemma}\label{lem:3.1a}
Let $D$ be a division ring with center $F$ and $G$ be a subnormal subgroup of $D^*$. If $G$ is locally polycyclic-by-finite (e.g. if $G$ is locally nilpotent), then $G\subseteq F$.
\end{lemma}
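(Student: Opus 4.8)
The plan is to reduce the statement to Lemma~\ref{lem:3.1}, whose hypothesis is exactly that $G$ be solvable-by-locally finite. Since $G$ is subnormal in $D^*$, once I know that $G$ is solvable-by-locally finite the conclusion $G\subseteq F$ follows immediately. So the whole problem becomes: show that a locally polycyclic-by-finite subnormal subgroup of $D^*$ is solvable-by-locally finite. The parenthetical remark about locally nilpotent groups is then just the special case in which every finitely generated subgroup is nilpotent, hence polycyclic.

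First I would record the easy half of the local picture. Every finitely generated subgroup $H\le G$ is polycyclic-by-finite, hence solvable-by-finite; in particular $H'$ is polycyclic and $H$ has a torsion-free solvable normal subgroup of finite index. Thus $G$ is \emph{locally} solvable-by-finite and $G'$ is locally polycyclic. The two division-ring inputs I will reserve for the end are the same ones used in Lemma~\ref{lem:3.1}: a torsion subnormal subgroup of $D^*$ lies in $F$ ([12, Theorem 8]), and a solvable subnormal subgroup lies in $F$ ([23, 14.4.4]). The end-game then mirrors Lemma~\ref{lem:3.1} verbatim: it suffices to prove that $G/Z(G)$ is locally finite, for then [1, Lemma 3] makes $G'$ locally finite, hence a torsion subnormal subgroup, so $G'\subseteq F\subseteq Z(G)$; this forces $G$ to be nilpotent of class at most $2$, in particular solvable, and a final application of [23, 14.4.4] yields $G\subseteq F$.

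The real target, therefore, is that $G/Z(G)$ be locally finite, and here I expect the main obstacle: subnormality of $G$ must be used essentially, because abstract group theory does not suffice. Indeed, the infinite unitriangular group $\mathrm{U}_\infty(\mathbb{Z})=\bigcup_n \mathrm{U}_n(\mathbb{Z})$ is locally nilpotent (so locally polycyclic-by-finite), yet it has trivial center and is not solvable-by-locally finite: a finite-index subgroup of the torsion-free nilpotent group $\mathrm{U}_n(\mathbb{Z})$ again has nilpotency class $n-1$, so any candidate solvable normal subgroup would be forced to have unbounded derived length as $n\to\infty$. Thus no purely group-theoretic argument can make $G/Z(G)$ locally finite; the hypothesis that $G$ sits subnormally in a division ring has to intervene.

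To bring in subnormality I would argue by induction on the subnormal defect $d$ of a series $G=G_0\triangleleft G_1\triangleleft\cdots\triangleleft G_d=D^*$. In the normal base case $d=1$ I would try to use the classical structure theory of non-central normal subgroups: if $G\not\subseteq F$ then $F[G]=D$ and $C_D(G)=F$, and I would attempt to convert the existence of an infinite polycyclic section inside $G'$ into a correspondingly large commuting subfield, contradicting the local solvable-by-finite bound, after which [12, Theorem 8] and [23, 14.4.4] close the case; for the inductive step I would replace $D^*$ by the division subring generated by the non-central normal subgroup $G_{d-1}$ and invoke the defect-$(d-1)$ case inside it. Making this normal base case rigorous --- that is, turning ``$G'$ has an infinite polycyclic section'' into a genuine contradiction with absolute irreducibility --- is the step I expect to be hardest, and it is precisely where the full weight of the division-ring hypothesis is carried.
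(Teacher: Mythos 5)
There is a genuine gap, and you have located it yourself: the entire division-theoretic content of the lemma is concentrated in the step you describe as ``the step I expect to be hardest,'' and for that step you offer no argument, only an intention. Your reduction (show $G$ is solvable-by-locally finite, then quote Lemma~\ref{lem:3.1}) is a reasonable outer shell, and your unitriangular-group example correctly shows that subnormality in $D^*$ must enter; but the mechanism you propose for making it enter --- ``convert the existence of an infinite polycyclic section inside $G'$ into a correspondingly large commuting subfield, contradicting the local solvable-by-finite bound'' --- is not coherent as stated. Local solvable-by-finiteness of $G$ is your \emph{hypothesis}, not a bound that a large commuting subfield could violate, and no known elementary argument turns a polycyclic section into a subfield in this way. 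So the proposal, as written, proves nothing beyond what Lemma~\ref{lem:3.1} already gives.

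The paper closes exactly this hole with a different and much shorter route, whose key input is a deep theorem you never invoke: Lichtman's theorem on skew fields generated by polycyclic-by-finite groups ([24, 4.5.2, p.~155], plus an exercise in [24, p.~162]). If $G\not\subseteq F$, Stuth's theorem gives $F(G)=D$; Lichtman's result then produces a \emph{noncyclic free subgroup} inside $G$, and this is an immediate contradiction, because a free group of rank $2$ is finitely generated but not polycyclic-by-finite (indeed not even solvable-by-finite), so it cannot sit inside a locally polycyclic-by-finite group. Note that this argument makes your intermediate target (``$G$ is solvable-by-locally finite'') unnecessary: the free subgroup kills the hypothesis directly, with no need to pass through Lemma~\ref{lem:3.1}, no induction on the subnormal defect, and no analysis of $G/Z(G)$. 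If you want to salvage your own plan, the realistic way to complete your base case is precisely to import this free-subgroup machinery --- at which point you would simply be reproducing the paper's proof with extra steps.
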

\begin{proof} Assume that $G$ is not contained in $F$. Then, by Stuth's theorem (see, for example, \cite[14.3.8, p. 439]{scott2}),  we have $D=F(G)$. By a result of Lichtman (see \cite[4.5.2, p.~155]{wehrfritz}),  together with an exercise in \cite[p. 162] {wehrfritz}, or with the fact that $G$ is contained in a unique maximal locally polycyclic-by-finite normal subgroup of $D^*$, it follows that $G$ contains a non-cyclic free subgroup. But this contradicts to the fact that $G$ is locally polycyclic-by-finite.
\end{proof}

\begin{theorem}\label{thm:3.3}
Let $D$ be a division ring with center $F$, and assume that $G$ is a non-central subnormal subgroup of $D^*$. If $x$ is a non-central element of $D$, then $|x^G|=|D|$. 
\end{theorem}
\begin{proof} By \cite[14.4.3, p. 439]{scott2}, $D$ is generated by $x^G$. Recall that if a ring $R$ has an infinite generating set $S$, then $R$ and $S$ have the same cardinality. Therefore,  to prove the theorem, it suffices to show that $x^G$ is infinite for any $x\in D\setminus F$. Thus, assume that $|x^G|<\infty$, or, equivalently, that $[G:C_G(x)]<\infty$. Since $G$ normalizes $F(x^G)$, by Stuth's theorem,  $F(x^G) =D$. Putting $H=(C_G(x))_G$, we have  $H\subseteq C_D(x^G)=C_D(D)=F$. The condition  $[G:C_G(x)]<\infty$ implies $[G:H]<\infty$. Hence, by Lemma \ref{lem:3.1}, $G\subseteq F$, a contradiction. Therefore, $|x^G|$ is infinite as we desired to prove. 
\end{proof}

 Theorem \ref{thm:3.3} shows that  non-central subnormal subgroups of $D^*$ are ``big".  In the following, using this fact, we show the role of  $FC$-elements in maximal subgroups of a subnormal subgroup of $D^*$. The results obtained in the next theorem will be used as principal tools for our study in the remaining part of this paper.

\begin{theorem}\label{thm:3.4}
Let $D$ be a division ring with center $F$ and $G$ be a subnormal subgroup of $D^*$. Assume that $M$ is  a maximal subgroup of $G$ containing a non-central $FC$-element $\alpha$. If $K=F(\alpha^M)$ and $H=C_D(K)$, then the following conditions hold:

(i) $K$ is a field, $[K:F]=[D:H]_r<\infty$ and $F(M)=D$, where $[D:H]_r$ is the right dimension of $D$ over $H$.

(ii)  $K^*\cap G$ is the $FC$-center,  and also, it is the Fitting subgroup of $M$.

(iii)  The field extension $K/F$ is Galois, $H^*\cap G$ is normal in $M$, and  $M/H^*\cap G\cong \mathrm{Gal}(K/F)$. Moreover, $\mathrm{Gal}(K/F)$ is a finite simple group.

(iv)  If $H^*\cap G\subseteq K$, then $H=K$ and $[D:F]<\infty$.
\end{theorem}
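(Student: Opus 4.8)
The plan is to work with $A:=(\alpha)^M$, a finite set permuted by $M$ under conjugation, so that $M$ normalizes the division subring $K=F(A)$ and the subgroup $N:=C_M(A)=C_M(K)=H^*\cap M$ is normal in $M$ of finite index (being a finite intersection of the finite-index subgroups $C_M(a)$, $a\in A$); thus $M/N$ is finite. The first reduction I would carry out is to pin down $N_G(K^*)$. Since $M\le N_G(K^*)\le G$ and $M$ is maximal, either $N_G(K^*)=G$ or $N_G(K^*)=M$. In the first case Stuth's theorem forces $K=D$ (as $K\not\subseteq F$), whence $H=F$, $N=M\cap F^*\le Z(M)$ has finite index, and Lemma~\ref{lem:3.2} makes $M$ abelian; then $(\alpha)^M=\{\alpha\}$ and $K=F(\alpha)$ is commutative, so $D=K$ is a field and $\alpha\in F$, a contradiction. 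Hence $N_G(K^*)=M$, and in particular $K^*\cap G$ and $H^*\cap G$, which normalize $K^*$, lie in $M$; so $H^*\cap G=N$.

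The heart of~(i) is that $K$ is a field and finite over $F$. For commutativity I would argue by contradiction using Theorem~\ref{thm:3.3}: if $K$ were noncommutative then $\alpha\notin Z(K)$, while $K^*\cap G$ is a subnormal subgroup of $K^*$ (intersect the subnormal series of $G$ with $K^*$) that is non-central in $K$ and contained in $M$; Theorem~\ref{thm:3.3} applied inside $K$ gives $|(\alpha)^{K^*\cap G}|=|K|$, but this class lies in the finite set $(\alpha)^M$, forcing $K$ finite, hence a field — a contradiction. Next I would show $F(M)=D$ by the same dichotomy: $F(M)^*\cap G$ equals $M$ or $G$; if it equalled $M$ then $M$ would be a non-central subnormal subgroup of $F(M)^*$ generating it, and Theorem~\ref{thm:3.3} (together with the exclusion of the abelian configuration exactly as above) would make $F(M)$ finite, a contradiction. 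Thus $F(M)=D$ and $C_D(M)=F$. Consequently the fixed field of the finite automorphism group $M/N$ acting on $K$ is $C_K(M)=K\cap C_D(M)=F$, and Artin's theorem yields that $K/F$ is a finite Galois extension with $\mathrm{Gal}(K/F)\cong M/N=M/(H^*\cap G)$; this delivers $[K:F]<\infty$ and the bulk of~(iii). The identity $[K:F]=[D:H]$ is then the centralizer (double-centralizer) theorem for the finite subfield $K$.

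For~(ii), one inclusion is immediate: each $k\in K^*\cap G\subseteq M$ has $|k^M|\le[M:N]<\infty$ because $N$ centralizes $K$, so $K^*\cap G$ lies in the FC-center, and it is abelian as a subset of the field $K$. For the reverse inclusion I would first show the FC-center maps trivially into $\mathrm{Gal}(K/F)$, i.e. lies in $N=H^*\cap G$: if an FC-element $x$ acted as $\sigma\ne 1$ on $K$, then $kxk^{-1}=x\,\sigma^{-1}(k)k^{-1}$ for $k\in K^*\cap G$, and $\sigma^{-1}(k)k^{-1}$ takes infinitely many values, contradicting $|x^M|<\infty$. Once $x\in N$, one notes that $x$ centralizes a finite-index, hence (by Lemma~\ref{lem:3.1}) non-central, subnormal subgroup $N_0$ of $H^*$; since $N_0$ generates $H$ over $Z(H)=K$ by Stuth, $x\in C_H(N_0)=Z(H)=K$, so $x\in K^*\cap G$. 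Being abelian and normal, $K^*\cap G$ sits inside the Fitting subgroup, and the reverse containment follows because a strictly larger nilpotent normal subgroup would survive nontrivially in the simple quotient $\mathrm{Gal}(K/F)$. Simplicity in~(iii) I would get from the Galois correspondence: a proper nontrivial normal subgroup of $\mathrm{Gal}(K/F)$ corresponds to an $M$-stable intermediate field, producing a subgroup strictly between $M$ and $G$ and contradicting maximality.

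Finally, for~(iv), assuming $H^*\cap G\subseteq K$ forces $H^*\cap G=K^*\cap G$ and this group to be central in $H$; combining $[M:H^*\cap G]<\infty$ with $F(M)=D$, I would write $M$ as $K^*\cap G$ together with finitely many coset representatives $m_1,\dots,m_s$ normalizing $K$, so that $\sum_i Km_i$ is a ring, finite-dimensional over the field $K$ and a domain, hence a division ring containing $M$ and $K$; therefore $D=F(M)=\sum_i Km_i$, giving $[D:K]<\infty$ and $[D:F]<\infty$, and then $H=K$ because a non-central subnormal subgroup of a finite-dimensional $D$ contains $D'$ and so cannot centralize a proper overfield of $Z(H)=K$. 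I expect the main obstacle to lie in the finiteness assertions of~(i): excluding the degenerate configuration in which the FC-element is central in $M$ (so $(\alpha)^M$ is a singleton), which is what could break $F(M)=D$ and the algebraicity of $\alpha$, and, dually, the commutator-counting step confining the FC-center to $N$; by comparison the crossed-product computation for~(iv) should be routine once finiteness is secured.
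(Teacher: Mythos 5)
Your setup and roughly the first half of the argument track the paper and are sound; in fact your route to the finiteness and Galois statements is cleaner than the paper's: you get $M/N$ finite directly from the FC-hypothesis, observe that its fixed field in $K$ is $K\cap C_D(M)=F$ once $F(M)=D$ is known, and invoke Artin's theorem, whereas the paper first proves $[D:H]<\infty$ by a transversal-ring argument and only then extracts $[K:F]=[D:H]$ and the Galois data. (One caveat: your ``exclusion of the abelian configuration exactly as above'' in the proof of $F(M)=D$ does not transfer, since ``above'' you had $K=D$ from Stuth; like the paper's own first paragraph, this step really needs $\alpha\notin Z(M)$ rather than merely $\alpha\notin F$.) The genuine failures are in the three hard assertions, which is precisely where the paper spends most of its effort. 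For simplicity of $\mathrm{Gal}(K/F)$ you promise ``a subgroup strictly between $M$ and $G$'' but never produce one: if $L$ is the $M$-stable intermediate field, the natural candidates $N_G(L^*)$ and $M\,(C_D(L)^*\cap G)$ both collapse to $M$ (by maximality plus Stuth) exactly in the troublesome case $C_D(L)^*\cap G\subseteq M$. The paper disposes of that case by a different mechanism: $K^*\cap G$ is then an abelian subnormal subgroup of $C_D(L)^*$, so Lemma \ref{lem:3.1} forces $K^*\cap G\subseteq Z(C_D(L))=L$, contradicting $K=F(\alpha^M)\subseteq L$. Nothing in your outline replaces this.

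Both reverse inclusions in (ii) are also genuinely gapped. For the Fitting subgroup, ``a strictly larger nilpotent normal subgroup $A$ would survive nontrivially in the simple quotient'' is not a contradiction: by simplicity the image of $A$ is all of $M/(H^*\cap G)$, so $\mathrm{Gal}(K/F)$ is a quotient of a nilpotent group, hence $\cong\mathbb{Z}_p$ --- and that configuration genuinely occurs (it is the setting of Theorem \ref{thm:3.6}). Eliminating it is the longest part of the paper's proof: one forms $S=\sum_i z_iK$ over a transversal of $A\cap H^*\cap G$ in $A$, shows $S=D$ by Stuth, deduces $[D:F]<\infty$, $K=H$ and $M=A$ nilpotent, and finally needs Dixon's theorem on nilpotent linear groups without nontrivial unipotent elements together with Lemma \ref{lem:3.2} to reach a contradiction; you have no substitute for this. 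For the FC-center, your counting claim --- that $\sigma^{-1}(k)k^{-1}$, $k\in K^*\cap G$, takes infinitely many values when $\sigma\neq 1$ --- cannot be established by counting at all: the number of values equals the index $[K^*\cap G:(K^{\sigma})^*\cap G]$, where $K^{\sigma}$ is the fixed field of $\sigma$, and if that index were finite with quotient of exponent $e$, then $k^e\in K^{\sigma}$ for all $k$, so every value $\sigma^{-1}(k)k^{-1}$ would be an $e$-th root of unity --- a perfectly consistent picture. The paper sidesteps this entirely by deducing the FC-center statement from the Fitting statement: an FC-element $x\in M\setminus K^*\cap G$ would make $F(x^M)^*\cap G$ a second Fitting subgroup of $M$. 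Finally, the last step of your (iv) invokes a false fact: a non-central subnormal subgroup of a centrally finite division ring need not contain $D'$ (for suitable quaternion algebras over number fields, $D'$ has proper non-central normal subgroups of finite index, which are subnormal in $D^*$ without containing $D'$). That slip is harmless, though: your own construction gives $[D:K]_r\le[K:F]$, while the centralizer theorem gives $[D:K]_r=[H:F]\ge[K:F]$, whence $H=K$ --- which is how the paper concludes.
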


\begin{proof} Firstly, we claim that $F(M)=D$. Since $M$ is a maximal subgroup of $G$, either $F(M)^*\cap G=M$ or 
$F(M)^*\cap G=G$. If $F(M)^*\cap G=M$, then $M$ is a subnormal subgroup of $F(M)^*$ containing a non-central $FC$-element $\alpha$. But, this is impossible in view of  Theorem \ref{thm:3.3}. Thus, $F(M)^*\cap G=G$, and by Stuth's theorem,  $F(M)=D$. Since $\alpha$ is an $FC$-element of $M$, $[M:C_M(\alpha)]<\infty$. Setting  $N=(C_M(\alpha))_M$, $K=F(\alpha^M)$ and $H=C_D(K)$, we have $N\vartriangleleft M$, $N\leq H^*$, and $[M:N]$ is finite. Clearly,  $M$ normalizes $K^*$. Therefore, by the maximality of $M$ in $G$, it follows that  either $G$ normalizes $K^*$ or $K^*\cap G\leq M$. If $G$ normalizes $K^*$, then by Stuth's theorem,  $K=D$, so $H=F$.  This implies $N\leq F^*\cap M\leq Z(M)$, and, consequently, $[M:Z(M)]<\infty$. Now, in view of  \cite[Theorem 1]{kiani-ram}, $M$ is abelian, a contradiction.  Thus, $K^*\cap G=K^*\cap M\trianglelefteq M$. Hence, $K^*\cap G$ is a subnormal subgroup of $K^*$ containing the set $\alpha^M$ of  $FC$-elements in $M$. By Theorem~\ref{thm:3.3},  $\alpha^M\subseteq Z(K)$; consequently, $K$ is a field. Since $H=C_D(K)$ and $M$ normalizes $K^*$, it follows that $M$ also normalizes $H^*$. By the maximality of $M$ in $G$, either $G$ normalizes $H^*$ or $H^*\cap G\leq M$. If $G$ normalizes $H^*$, then by Stuth's theorem,  either $H=D$ or $H\subseteq F$. However, both these cases are impossible since $K\subseteq C_D(K)=H$, and $K$ contains an element $\alpha\not\in F=Z(D)$. Therefore, $H^*\cap G=H^*\cap M\trianglelefteq M$. The conditions $N\leq H^*\cap G$ and $[M:N]<\infty$ imply $[M:H^*\cap G]<\infty$, and $M=\bigcup_{i=1}^tx_i(H^*\cap G)$ for some transversal $\{x_1, \ldots, x_t\}$ of $H^*\cap G$ in $M$. Since $M$ normalizes $H^*$, it is easy to see that $R:=\sum_{i=1}^tx_iH$ is a ring. Also, $R$ is a finite-dimensional right vector space over its division  subring $H$. It follows that $R$ is a division subring of $D$ containing $F(M)$. Therefore $R=D$, and $[D:H]_r<\infty$. Using Double Centralizer theorem \cite[15.4, p. 253]{lam},  we get $[K:F]=[D:H]_r<\infty$, and $Z(H)=C_D(H)=K$. Thus, $(i)$ is established.
\\\indent
For any $a\in M$, the mapping $\theta_a:K\to K$ given by $\theta_a(x)=axa^{-1}$ is an $F$-automorphism of $K$. Now, consider  the mapping $\psi:M\to \mathrm{Gal}(K/F)$ given by $\psi(a)=\theta_a$. Clearly, $\psi$ is a group homomorphism with $\mathrm{ker}\psi=C_M(K^*)=C_D(K)^*\cap M=H^*\cap M=H^*\cap G$. The condition $F(M)=D$ implies $C_D(M)=F$. Therefore, the fixed field of $\psi (M)$ is $F$. By Galois correspondence, we conclude that $\psi$ is a surjective homomorphism, and $K/F$ is a Galois extension.  Hence, $M/H^*\cap G\cong\mathrm{Gal}(K/F)$ is a finite group. Assume that $\mathrm{Gal}(K/F)$ is not simple. Then, there exists some subfield $L$ of $K$ containing $F$ such that $\theta(L)=L$ for all $\theta\in \mathrm{Gal}(K/F)$. Thus, $L^*$ and $C_D(L)^*$ are normalized by $M$,  $E:=C_D(L)\neq D$ and $E\not\subseteq F$. So, by Stuth's theorem, $E^*$ is not normalized by $G$. If $E^*\cap G\not\subseteq M$, then $G=M(E^*\cap G)$ normalizes $E^*$, a contradiction. Thus, $E^*\cap G\leq M$. Since $K^*\cap G\trianglelefteq M$ and $K^*\cap G\leq E^*\cap G$, we have $K^*\cap G\trianglelefteq E^*\cap G$. So, $K^*\cap G$ is an abelian subnormal subgroup of $E^*$. By Lemma \ref{lem:3.1}, it follows that  $K^*\cap G\subseteq Z(E)= L$. Hence, $K=F(\alpha^M)\subseteq F(K^*\cap G)\subseteq L$, a contradiction. Therefore, $\mathrm{Gal}(K/F)$ is simple, and the proof of $(iii)$ is now complete.
\\\indent
If $H^*\cap G\subseteq K$, then $H^*\cap G=K^*\cap G$. The condition $[M: H^*\cap G]< \infty$ implies $[M:K^*\cap G]<\infty$. Suppose that $\{y_1,\ldots,y_l\}$ is a transversal of $K^*\cap G$ in $M$. Since $M$ normalizes $K^*$,  $Q=\sum_{i=1}^ly_iK$  is a division ring. Clearly, $Q$ contains both $F$ and $M$. So, in view of $(i)$, we have $Q=D$.  It is easy to see that $[D:K]_r=[Q:K]_r\leq|M/H^*\cap G|=|\mathrm{Gal}(K/F)|=[K:F]$. Hence, by Double Centralizer theorem,  $K$ is a maximal subfield of $D$, $K=H$, and $[D:F]<\infty$. Thus $(iv)$ is established.
\\\indent
To prove $(ii)$, firstly, we claim that $K^*\cap G$ is a maximal abelian normal subgroup of $M$. Indeed, assume that $C$ is a maximal abelian normal subgroup of $M$ containing $K^*\cap G$. Then, $\alpha^M\subseteq C$, and it follows that $C\leq H^*\cap G\leq M$. Consequently, $C\trianglelefteq H^*\cap G$. Hence, $C$ is an abelian subnormal subgroup of $H^*$. Now, by Lemma~ \ref{lem:3.1}, $C\subseteq Z(H)=K$, and this implies $K^*\cap G=C$.
\\\indent
To prove that $K^*\cap G$ is the Fitting subgroup of $M$, it suffices to show that $K^*\cap G$ is a maximal nilpotent normal subgroup of $M$. 
Thus, assume that $A$ is a nilpotent normal subgroup  of $M$ which strictly contains $K^*\cap G$. Then, $B=H^*\cap G\cap A$ is a nilpotent subnormal subgroup of $H^*$. Hence, by Lemma \ref{lem:3.1}, we conclude that $B\subseteq Z(H)=K$, and, consequently, $B=K^*\cap G\cap A$. If $A\subseteq H^*\cap G$, then $A=B\subseteq K^*\cap G$, a contradiction. Therefore, $A\not\subseteq H^*\cap G$, and it follows that $A(H^*\cap G)/H^*\cap G$ is a  nontrivial normal subgroup of $M/H^*\cap G$. The simplicity of the group $M/H^*\cap G$ implies $M/H^*\cap G=A(H^*\cap G)/H^*\cap G\cong A/B$. Suppose that $S=\sum_{i=1}^mz_iK$, where $\{z_1,\ldots ,z_m\}$ is a transversal of $B$ in $A$. Since $A$ normalizes $K^*$ and $B\subseteq K$, $S$ is a division ring and $[S:K]_r\leq m$. Recall that $M$ normalizes $A$ and $K$. So, $M$ also normalizes $S$.  By the maximality of $M$ in $G$,  either $G$ normalizes $S$ or $S^*\cap G\leq M$. If the second case occurs, then $A$ is a nilpotent subnormal subgroup of $S^*$. By Lemma~\ref{lem:3.1}, $A$ is abelian, and  this contradicts to the fact that $K^*\cap G$ is a maximal abelian normal subgroup of $M$. Thus, $G$ normalizes $S$, and by Stuth's theorem, $S=D$. Therefore,  $[D:K]_r\leq m=|M/H^*\cap G|=|\mathrm{Gal}(K/F)|=[K:F]$. This  implies  $[D:F]=m^2$, and $K=H$ is a maximal subfield of $D$. From the fact that $M/H^*\cap G=A(H^*\cap G)/H^*\cap G$, and $K^*\cap G<A$, it follows that $M=A$. Since $[D:F]<\infty$, $M$ can be considered as a nilpotent linear group no containing unipotent elements ($\neq 1$). By a result in \cite[p. 114]{dixon},  $[M:Z(M)]$ is finite, which contradicts to \cite[Theorem 1]{kiani-ram}. Hence, $K^*\cap G$ is the Fitting subgroup of $M$.
\\\indent
For any $x\in K^*\cap G$, the elements of $x^M\subseteq K$ have the same minimal polynomial over $F$, so $|x^M|<\infty$. Now, assume that $x\in M\setminus K^*\cap G$. If $|x^M|<\infty$, then by what we have proved, $F(x^M)\cap G$ is the Fitting subgroup of $M$ which is different from  $K^*\cap G$, a contradiction.  Hence, $|x^M|=\infty$ and $(ii)$ follows. Thus, the proof of the theorem is now complete.
\end{proof}

From Theorem \ref{thm:3.4}, we get the following  corollary which is   convenient for further applications.

\begin{corollary}\label{cor:3.5}
Let $D$ be a division ring with center $F$ and $G$ be a subnormal subgroup of $D^*$. Assume that $M$ is a maximal subgroup of G. If $M$ contains an abelian normal subgroup $A$ and an element $\alpha\in A\setminus Z(M)$ which is algebraic over $F(Z(M))$, then $K=F(A)$, and $H=C_D(K)$ satisfy the conditions (i) - (iv) of Theorem \ref{thm:3.4}. Moreover, $F(A)=F[A]$.
\end{corollary}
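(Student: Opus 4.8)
The plan is to reduce the corollary to Theorem~\ref{thm:3.4} by verifying that $\alpha$ is a non-central $FC$-element of $M$, and then to identify the field $F(A)$ with the field $F(\alpha^M)$ appearing in that theorem. First I would observe that the elements of $Z(M)$ pairwise commute and commute with $F$, so $E:=F(Z(M))$ is a (commutative) subfield of $D$, and that $F^*\cap M\subseteq Z(M)$. Since $\alpha\in A\setminus Z(M)$, this already forces $\alpha\notin F$, so $\alpha$ is non-central in the sense required by Theorem~\ref{thm:3.4}.

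The key step is to prove $|\alpha^M|<\infty$. Let $f(t)\in E[t]$ be the minimal polynomial of $\alpha$ over $E$, of degree $n$. Every $m\in M$ centralizes $Z(M)$, hence centralizes $E$ and so commutes with all coefficients of $f$; conjugating $f(\alpha)=0$ by $m$ then gives $f(\alpha^m)=0$, so every conjugate of $\alpha$ is a root of $f$. Because $A$ is normal, all these conjugates lie in $A$, and because $A$ is abelian while $E$ centralizes $A$, the ring $F[Z(M)\cup A]$ is commutative, so $E(A)$ is a field. A polynomial of degree $n$ has at most $n$ roots in this field, whence $|\alpha^M|\le n<\infty$, and $\alpha$ is an $FC$-element of $M$.

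I can now apply Theorem~\ref{thm:3.4} with $K_0=F(\alpha^M)$ and $H_0=C_D(K_0)$ to obtain conclusions (i)--(iv) for this pair. The remaining task is the identification $F(\alpha^M)=F(A)$. One inclusion is immediate from $\alpha^M\subseteq A$. For the reverse I would invoke Theorem~\ref{thm:3.4}(ii): since $A$ is an abelian, hence nilpotent, normal subgroup of $M$, it lies in the Fitting subgroup $K_0^*\cap G$ of $M$, so $A\subseteq K_0$ and therefore $F(A)\subseteq K_0=F(\alpha^M)$. This bootstrap yields $K:=F(A)=F(\alpha^M)=K_0$ and $H=C_D(K)=H_0$, so statements (i)--(iv) hold verbatim for $K$ and $H$.

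For the final assertion, note that $F[A]$ is a commutative domain (as $F$ is central and $A$ is abelian) contained in the field $K=F(A)$, which is finite-dimensional over $F$ by (i); hence $F[A]$ is a finite-dimensional $F$-algebra that is a domain, so it is a field, and consequently $F[A]=F(A)$. The main obstacle is the finiteness of $\alpha^M$ together with the bootstrap identification $F(\alpha^M)=F(A)$ via part (ii) of Theorem~\ref{thm:3.4}; once $\alpha$ is known to be a non-central $FC$-element, the rest is just matching the data of the two statements.
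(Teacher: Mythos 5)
Your proposal is correct and follows essentially the same route as the paper: show $|\alpha^M|<\infty$ by noting that all $M$-conjugates of $\alpha$ lie in the commutative field $F(Z(M)A)$ and share the minimal polynomial of $\alpha$ over $F(Z(M))$, apply Theorem~\ref{thm:3.4} to $K_0=F(\alpha^M)$, then use part (ii) (the Fitting subgroup) to conclude $A\subseteq K_0$ and hence $F(A)=F(\alpha^M)$, with $F[A]=F(A)$ following from $[K:F]<\infty$. Your write-up simply makes explicit some steps the paper leaves implicit (e.g.\ that $\alpha\notin F$ and that $F(Z(M)A)$ is indeed a field).
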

\begin{proof} Since $A\trianglelefteq M$, the elements of $\alpha^M$ in the field $F(Z(M)A)$ have the same minimal polynomial over $F(Z(M))$. Hence, $|\alpha^M|<\infty$. Now, it is clear that  $K=F(\alpha^M)$ and $H=C_D(K)$  satisfy the conditions (i) - (iv) of Theorem \ref{thm:3.4}. Since $[K:F]<\infty$, by (ii), $A\subseteq K$, and $K=F(A)=F[A]$.
\end{proof}

\begin{theorem}\label{thm:3.6}
Let $D$ be a division ring with center $F$ and $G$ be a subnormal subgroup of $D^*$. If  $M$ is  a non-abelian metabelian maximal subgroup of $G$, then the following conditions hold:

(i) There exists a maximal subfield $K$ of $D$ such that $K/F$ is a finite Galois extension with $\mathrm{Gal}(K/F)\cong M/K^*\cap G\cong \mathbb{Z}_p$ for  some prime $p$, and $[D:F]=p^2$. 

(ii) The subgroup $K^*\cap G$ is the $FC$-center. Also, $K^*\cap G$ is  the Fitting subgroup of $M$, and  $M/M'Z(M)\cong\bigoplus_{i\in I}\mathbb{Z}_p$. Furthermore, for any $x\in M\setminus K$, we have $x^p\in F$ and $D=F[M]=\bigoplus_{i=1}^pKx^i$.
\end{theorem}
\begin{proof} 

(i) Denote  a maximal abelian normal subgroup of $M$ containing $M'$ by $A$ and consider an arbitrary subgroup $N$ of $M$ which properly contains  $A$. Since  $M'\leq N$,  $N$ is a normal subgroup of $M$. The maximality of $M$ in $G$ implies that either $G$ normalizes $F(N)^*$ or $F(N)^*\cap G\leq M$. If the second case occurs, then $N\trianglelefteq F(N)^*\cap G$. So, $N$ is a metabelian subnormal subgroup of $F(N)^*$. By Lemma~\ref{lem:3.1}, $N$ is abelian, which contradicts to the maximality of $A$. Hence,  $G$ normalizes $F(N)^*$, and by Stuth's theorem,  $F(N)=D$.
\\\indent
Now, let $a$ be an element from $M\setminus A$, and assume that $a$ is transcendental over $F(A)$. Set $T=F(A)^*\langle a^2\rangle$. Since $a$ normalizes $F(A)^*$, it is not hard to see that $F[T]=\bigoplus_{i\in\mathbb{Z}}F(A)a^{2i}$, and $(F[T], F(A), T, T/F(A)^*)$ is a crossed product. Since $T/F(A)^*\cong\langle a^2\rangle$ is an infinite cyclic group,  by \cite[1.4.3, p. 26]{wehrfritz},  $F[T]$ is an Ore domain. On the other hand, by what we have proved before, we have $F(T)=D$. Therefore,  there exist two elements $s_1, s_2\in F[T]$ such that $a=s_1s_2^{-1}$. Writing   $s_1=\sum_{i=l}^mk_ia^{2i}$ and $s_2=\sum_{i=l}^mk'_ia^{2i}$, with $k_i,\ k'_i\in F(A)$ for any $l\leq i\leq m$, we have  $\sum_{i=l}^mak'_ia^{2i}=\sum_{i=l}^mk_ia^{2i}$. If for any $l\leq i\leq m$ we set $l_i=ak'_ia^{-1}$, then $l_i$'s are elements of $F(A)$, and  $\sum_{i=l}^ml_ia^{2i+1}=\sum_{i=l}^mk'_ia^{2i}$. This shows that $a$ is algebraic over $F(A)$, say of degree $n$. Using the fact that $a$ normalizes $F(A)^*$, we see that $R=\bigoplus_{i=0}^{n-1}F(A)a^i$ is a domain which is a finite-dimensional left vector space over $F(A)$. Therefore, $R$ is a division ring, and $R=F(A\langle a\rangle)$. By what we have  proved before, we conclude that $R=D$. This means that $[D:F(A)]_l<\infty$. Now, by Double Centralizer theorem, $[D:F]<\infty$.
\\\indent
Assume that $A$ is contained in $Z(M)$. From the condition $M'\leq A$, it follows that $\langle A,x\rangle$ is an abelian normal subgroup of $M$ properly containing $A$ for any  $x\in M\setminus A$. But, this contradicts to the maximality of $A$. Hence, $A\not\subseteq Z(M)$. Since $[D:F]<\infty$, all elements of $A\setminus Z(M)$ are algebraic over $F$. In view of  Corollary \ref{cor:3.5}, there exists a subfield $K$ of $D$ such that $K$ and  $H=C_D(K)$ satisfy the conditions $(i)-(iv)$ of Theorem~\ref{thm:3.4}. So, $H^*\cap G$ is a metabelian subnormal subgroup of $H^*$. By Lemma~ \ref{lem:3.1}, $H^*\cap G\subseteq Z(H)=K$. The condition $(iv)$ implies that $K=H$ is a maximal subfield of $D$. Since $M$ is metabelian, $M/K^*\cap G$ is simple and metabelian. We conclude that $\mathrm{Gal}(K/F)\cong M/K^*\cap G\cong \mathbb{Z}_p$, where $p$ is a prime number,  and  $[D:F]=p^2$. 

(ii) Since $[M:K^*\cap G]=p$ and $D$ is algebraic over $F$,  $D=F(M)=F[M]=\sum_{i=1}^pKx^i$ for any $x\in M\setminus K$. Therefore, $D=\bigoplus_{i=1}^pKx^i$. Suppose that  $x^p\not\in F$. Then, $Z(M)=M\cap F^*$ because  $F(M)=D$. Therefore, $C_M(x^p)\neq M$. On the other hand, since $\langle x,K^*\cap G\rangle\leq C_M(x^p)$, and $[M:K^*\cap G]$ is prime,  we get  $C_M(x^p)=M$, a contradiction. Thus, $x^p\in F$. Now, by Corollary \ref{cor:2.5},  for any $y\in K^*\cap M=K^*\cap G$, we have $y^p\in M'F^*$. Hence, $y^p\in M'(M\cap F^*)=M'Z(M)$. So, $M/M'Z(M)$ is an abelian group of exponent $p$, and by Bear-Prufer's theorem \cite[p. 105]{rob},  $M/M'Z(M)\cong\bigoplus_{i\in I}\mathbb{Z}_p$. This completes the proof.
\end{proof}

\begin{lemma}\label{lem:3.8}
If $D$ is a centrally finite  division ring with center $F$, then $D'\cap F^*$ is finite.
\end{lemma}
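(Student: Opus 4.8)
The plan is to use the reduced norm of the centrally finite division ring $D$. Since $[D:F]<\infty$, the dimension $[D:F]$ is a perfect square, say $[D:F]=n^2$, and $n$ is the degree of $D$ over $F$. Associated to this data there is the reduced norm map $\mathrm{Nrd}\colon D^*\to F^*$, a well-defined group homomorphism obtained, for instance, by passing to a splitting field $E$ of $D$, fixing an isomorphism $D\otimes_F E\cong M_n(E)$, and taking the determinant; the image lies in $F^*$ and is independent of the choices. The two properties I would use are standard: first, $\mathrm{Nrd}$ is multiplicative, hence a homomorphism from $D^*$ into the abelian group $F^*$; second, for a central element $a\in F^*$ one has $\mathrm{Nrd}(a)=a^n$, because $a\otimes 1$ corresponds to the scalar matrix $aI_n$.

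The argument would then proceed in three short steps. First, because $F^*$ is abelian and $\mathrm{Nrd}$ is a homomorphism, every commutator is killed by $\mathrm{Nrd}$; hence $\mathrm{Nrd}(D')=1$, i.e. $D'\subseteq\ker\mathrm{Nrd}$. Second, I would take any $x\in D'\cap F^*$: on one hand $\mathrm{Nrd}(x)=1$ by the previous step, while on the other hand $x\in F^*$ gives $\mathrm{Nrd}(x)=x^n$; combining these yields $x^n=1$, so $x$ is an $n$-th root of unity in $F$. Third, since $F$ is a field, the polynomial $t^n-1$ has at most $n$ roots in $F$, so the set of $n$-th roots of unity in $F$ has at most $n$ elements.

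Putting these together, $D'\cap F^*$ is contained in the group of $n$-th roots of unity of $F$ and therefore has at most $n$ elements; in particular it is finite, as required.

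The only nonroutine input is the existence and the two listed properties of the reduced norm on a centrally finite division ring, which are classical facts about central simple algebras; once these are granted, the rest is elementary, so I expect this to be the main (and essentially the only) obstacle. If one prefers to avoid invoking the reduced norm as a black box, the same two facts can be produced directly: embed $D$ into $M_n(E)$ over a splitting field $E$, note that a commutator of invertible matrices has determinant $1$ while a central scalar $a$ maps to $aI_n$ with determinant $a^n$, and run the identical computation with $\det$ in place of $\mathrm{Nrd}$.
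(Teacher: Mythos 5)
Your proof is correct and follows exactly the paper's own argument: the paper also takes the reduced norm, deduces $x^n=1$ for every $x\in D'\cap F^*$, and concludes finiteness from the fact that $F$ is a field. Your write-up merely fills in the standard details (multiplicativity of $\mathrm{Nrd}$ and $\mathrm{Nrd}(a)=a^n$ for central $a$) that the paper leaves implicit.
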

\begin{proof} Suppose $[D:F]=n^2$. By taking the reduced norm, we obtain $x^n=1$ for all $x\in D'\cap F^*$. Since $F$ is a field, $D'\cap F^*$ is finite.
\end{proof}

\begin{lemma}\label{lem:3.9}
Let $D$ be a division ring with center $F$ and $G$ be a subnormal subgroup of $D^*$. If  $M$ is a non-abelian maximal subgroup of $G$ such that $M'$ is locally finite, then $M/Z(M)$ is locally finite, $M'$ is locally cyclic, and the conclusions of Theorem \ref{thm:3.6} follow.
\end{lemma}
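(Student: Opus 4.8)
The plan is to deduce the statement from Theorem~\ref{thm:3.6} by first showing that $M$ is metabelian, i.e.\ that $M'$ is abelian; the two remaining assertions are then read off from the finite structure that Theorem~\ref{thm:3.6} provides. Since $M$ is non-abelian we have $M'\neq 1$ and $G\not\subseteq F$, and $M'\trianglelefteq M$, so $M$ normalizes the division subring $F(M')$ and hence $F(M')^{*}$. By maximality of $M$ in $G$, either $G$ normalizes $F(M')^{*}$ or $F(M')^{*}\cap G\subseteq M$. In the second case $M'$ is subnormal in $F(M')^{*}$ (it is normal in $M\supseteq F(M')^{*}\cap G$, and $F(M')^{*}\cap G$ is subnormal in $F(M')^{*}$ because $G$ is subnormal in $D^{*}$); being locally finite it is torsion, so by [12, Theorem~8] it lies in $Z(F(M'))$ and is abelian. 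In the first case Stuth's theorem forces $F(M')\subseteq F$ or $F(M')=D$; if $F(M')\subseteq F$ then $M'\subseteq F^{*}\cap M\subseteq Z(M)$, so $M$ is nilpotent and Theorem~\ref{thm:3.7} makes $M$ abelian, a contradiction. Thus the only way $M'$ can fail to be abelian is if $F(M')=D$.

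I expect the case $F(M')=D$ to be the main obstacle. First note that each finite subgroup $S\subseteq M'$ generates a finite-dimensional $F$-subalgebra $F[S]$, so $D=F(M')$ is algebraic over $F$. In characteristic $p>0$ this case evaporates: for finite $S\subseteq M'$ the subring generated over the prime field is a finite domain, hence a finite (commutative) field, so every finite subgroup of $M'$ is cyclic; therefore $M'$ is locally cyclic and in particular abelian. In characteristic $0$ one must exclude a non-abelian locally finite $M'$ with $F(M')=D$. Here I would exploit the rigidity of maximality: since $M'\trianglelefteq M$, maximality gives $N_{G}(M')\in\{M,G\}$, and $N_{G}(M')=G$ would make $M'$ a torsion subnormal subgroup of $D^{*}$, whence $M'\subseteq F$ by [12, Theorem~8], contradicting that $M'$ is non-abelian; so $N_{G}(M')=M$. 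The delicate point is to turn this remaining configuration into a contradiction. The mechanism I would pursue is a noncyclic free subgroup: if one can force such a subgroup $W$ into $M$, then $W'=[W,W]$ is a nontrivial noncyclic free group contained in $M'$, which has elements of infinite order and so cannot sit inside the locally finite $M'$. Producing free subgroups from the non-commutativity of $D=F(M')$ together with the subnormality of $G$ is exactly the phenomenon this paper specializes in, and this is where the genuine work lies; the alternative, equally acceptable, is to show directly that the configuration forces $[D:F]<\infty$ or $M'\subseteq F$, either of which closes the argument.

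Granting that $M'$ is abelian, $M$ is non-abelian metabelian and Theorem~\ref{thm:3.6} applies: there are a prime $p$ and a maximal subfield $K$ of $D$ with $[D:F]=p^{2}$, with $\mathrm{Gal}(K/F)\cong M/(K^{*}\cap G)\cong\mathbb{Z}_{p}$, with $K^{*}\cap G$ the Fitting subgroup and the $FC$-centre of $M$, and with $M'\subseteq K$ (as $M'$ lies in a maximal abelian normal subgroup $A\subseteq K$). Since $M'$ is a torsion subgroup of the multiplicative group of the field $K$, every finite subgroup of $M'$ is cyclic, so $M'$ is locally cyclic, which is the second assertion.

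Finally, to obtain that $M/Z(M)$ is locally finite I would argue as follows. Because $F(M)=D$ we have $C_{D}(M)=F$, so $Z(M)=M\cap F^{*}\subseteq K^{*}\cap G=:B$, and $B$ is abelian of index $p$ in $M$. Fix $x\in M\setminus K$ and define $\phi\colon B\to B$ by $\phi(b)=[x,b]$. Since $B$ is abelian and $x$ normalizes $B$, the map $\phi$ is a homomorphism with kernel $C_{B}(x)=Z(M)$ and image contained in $M'$. Hence $B/Z(M)\cong\phi(B)\subseteq M'$ is torsion and locally cyclic, so $B/Z(M)$ is locally finite; as $[M:B]=p$, the group $M/Z(M)$ is a finite extension of a locally finite group and is therefore locally finite. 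This completes the plan, the only genuinely hard input being the characteristic-zero instance of the case $F(M')=D$.
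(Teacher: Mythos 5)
Your easy reductions are correct and essentially match the paper's (partly implicit) ones: the case $F(M')^*\cap G\subseteq M$ via [12, Theorem 8]/Lemma \ref{lem:3.1}, the case $F(M')\subseteq F$, the characteristic $p>0$ case (finite subgroups of a division ring of positive characteristic are cyclic, so $M'$ is locally cyclic), and your final paragraph deriving ``$M'$ locally cyclic'' and ``$M/Z(M)$ locally finite'' from Theorem \ref{thm:3.6} is sound. But there is a genuine gap exactly where the lemma's real content lies: the case $\mathrm{char}\, D=0$, $F(M')=D$, $M'$ non-abelian and locally finite. You explicitly leave this case unproved (``this is where the genuine work lies''), offering only two unexecuted strategies. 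The first --- forcing a noncyclic free subgroup into $M$ --- is not available with the tools at hand: $M$ is not subnormal in $D^*$, $D$ is not assumed locally finite, so Theorem \ref{thm:3.14} does not apply, and no result in the paper (or in the standard literature invoked by it) produces free subgroups inside a maximal subgroup of a subnormal subgroup of an arbitrary division ring; notably, the paper's own proof of this lemma never touches free subgroups. The second strategy (force $[D:F]<\infty$ or $M'\subseteq F$) is indeed what the paper does, but it constitutes the bulk of the proof rather than a routine closing step.

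Concretely, the paper's argument for the missing case runs as follows. By the Shirvani--Wehrfritz structure theory of locally finite skew linear groups [24, 2.5.5], $M'$ has a metabelian normal subgroup of finite index $n$; putting $Q=\langle x^n \mid x\in M'\rangle$, either $Q'$ is a torsion abelian normal subgroup of $M$ not contained in $Z(M)$, or a chain of reductions using [24, 2.5.2], the bounded-exponent classification [24, 2.5.9], Lemma \ref{lem:3.8} and the Double Centralizer theorem forces $M'$ to be finite, whence $M$ is an $FC$-group and hence abelian by Theorem \ref{thm:3.4}, a contradiction. Once a torsion abelian normal subgroup outside $Z(M)$ exists, Corollary \ref{cor:3.5} yields the field $K$ and $H=C_D(K)$ of Theorem \ref{thm:3.4}; then $N=M'\cap H^*$ is abelian by Lemma \ref{lem:3.1}, so $M'/N\cong M/(M\cap H^*)$ is a non-abelian finite simple group, and a finitely generated (hence finite) subgroup $S\leq M'$ covering this quotient is unsolvable; by the classification of finite subgroups of division rings [24, 2.1.4], the only unsolvable possibility is $SL(2,5)$, which forces $M'\cong SL(2,5)$ to be finite, again making $M$ an $FC$-group and abelian by Theorem \ref{thm:3.4} --- the final contradiction. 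None of this machinery (the locally finite skew linear group structure theorems, the $SL(2,5)$ classification, the $FC$-group argument) appears in your proposal, so as written it does not prove the lemma.
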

\begin{proof} Assume that  $M'$ is non-abelian. In view of  Wedderburn's Little theorem, we must have ${\rm Char} D$=0. First, we claim that there exists a torsion abelian normal subgroup of $M$ which is not contained in $Z(M)$. By the maximality of $M$ in $G$, either $F(M')\cap G\subseteq M$ or $G$ normalizes $F(M')$. If $F(M')\cap G\subseteq M$, then $M'$ is a locally finite subnormal subgroup of $F(M')^*$. In view of  Lemma \ref{lem:3.1}, $M'$ is abelian, a contradiction. Therefore, $G$ normalizes $F(M')$, and  by Stuth's theorem,  $F[M']=F(M')=D$. We notice that by \cite[2.5.5, p. 74]{wehrfritz},  there exists a metabelian normal subgroup of $M'$ of finite index $n$. By setting $Q=\langle\{ x^n|x\in M'\}\rangle$, we see that $Q$ is a metabelian normal subgroup of $M$ and $Q'$ is a torsion abelian normal subgroup of $M$. If $Q'$ is not contained in $Z(M)$, then we are done. Hence, we may assume that $Q'\leq Z(M)$. So, $Q$ is locally finite and nilpotent. By \cite[2.5.2, p. 73]{wehrfritz}, $Q$ contains an abelian subgroup $B$ such that $[Q:B]<\infty$. Clearly,  $C=\cap_{x\in M} xBx^{-1}$ is a torsion abelian normal subgroup of $M$, and we may assume that $C\leq Z(M)$. On the other hand, since $[Q:B]<\infty$ and $Q\trianglelefteq M$,  $Q/C$ has a bounded exponent. From the condition  $C\leq Z(M)$ and the definition of $Q$, we see that the exponent of $M'/Z(M')$ is also bounded. Therefore, by the classification theorem of locally finite groups in division ring \cite[2.5.9, p. 75]{wehrfritz}, we conclude that $M'$ is abelian-by-finite. Now, let $A$ be an abelian normal subgroup of $M'$ of finite index. Since $F[M']=D$,  $D$ is a finite-dimensional vector space over  $F[A]$. By Double Centralizer theorem, $[D:F]<\infty$. So, by Lemma \ref{lem:3.8}, $|A\cap F^*|<\infty$. The condition $F[M']=D$ implies $Z(M')=F\cap M'$. Hence, $A/A\cap F^*$ has a bounded exponent, so is $A$. Considering  $A$  as the multiplicative subgroup of a field, we conclude  that $A$ is finite, so is $M'$. It follows that $M$ is an $FC$-group.  By Theorem \ref{thm:3.4}, $M$ is abelian,  a contradiction. Therefore, there exists a torsion abelian normal subgroup of $M$ which is not contained in $Z(M)$. 
\\\indent
Now, by Corollary \ref{cor:3.5}, there exists  a field $K$ such that $K$ and $H=C_D(K)$ satisfy the conditions $(i) - (iv)$ of Theorem \ref{thm:3.4}. If  $N=M'\cap H^*$, then  $N\trianglelefteq M\cap H^*=G\cap H^*$. Therefore, $N$ is a locally finite subnormal subgroup of $H^*$. By Lemma \ref{lem:3.1}, $N=M'\cap H^*$ is abelian; consequently, $M'\not\subseteq M\cap H^*$. If $M/M\cap H^*$ is abelian, then $M'\subseteq M\cap H^*$, a contradiction. Thus, from $(iii)$ of Theorem \ref{thm:3.4}, $M'/N\cong M'(M\cap H)/M\cap H=M/M\cap H$ is a non-abelian finite simple group. Assume that $\{x_1,\ldots,x_m\}$ is a transversal of $N$ in $M'$. Then, $S=\langle x_1,\ldots,x_m\rangle$ is a finite subgroup of $M'$, and
$S/S\cap N\cong SN/N=M'/N.$ Since $S$ has a quotient group which is a finite simple non-abelian group, $S$ is unsoluble. The classification theorem of finite groups in division rings \cite[2.1.4, p. 46]{wehrfritz} states that the only unsoluble finite subgroup of a division ring is $SL(2,5)$. Hence, $M'=S\cong SL(2,5)$ is a finite group. Thus, $M$ is an $FC$-group, and  by Theorem \ref{thm:3.4}, it is abelian, a contradiction. Therefore, $M'$ is abelian, and it can be considered as a torsion multiplicative subgroup of a field. Hence, $M'$ is locally cyclic, and the proof is now  finished by  Theorem~ \ref{thm:3.6}.
\end{proof}

In \cite{free}, M. Mahdavi-Hezavehi studied the existence of non-cyclic free subgroups in a maximal subgroup of a centrally finite division ring $D$. The main result in this work asserts that if $D$ is a non-crossed product, then every maximal subgroup of $D$ contains a non-cyclic free subgroup. Here, we study the same problem for maximal subgroups in any subnormal subgroup $G$ of a locally finite division ring $D$. The result we get in the following theorem shows that the missing of non-cyclic free subgroups in a maximal subgroup of $G$ entails $D$ to be centrally finite. Moreover, if $G=D^*$, then $D$ must be a crossed product. 
 
\begin{theorem}\label{thm:3.15}
Let $D$ be a locally finite division ring with center $F$ and $G$ be a subnormal subgroup of $D^*$. Assume that $M$ is  a non-abelian maximal subgroup of $G$. If $M$ contains no non-cyclic free subgroups, then $[D:F]<\infty$, $F(M)=D$, and there exists a maximal subfield $K$ of $D$ such that $K/F$ is a Galois extension, $\mathrm{Gal}(K/F)\cong M/K^*\cap G$ is a finite simple group, and $K^*\cap G$ is the $FC$-center. Also, $K^*\cap G$ is  the Fitting subgroup of $M$.
\end{theorem}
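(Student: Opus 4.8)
The plan is to use the hypothesis that $M$ contains no noncyclic free subgroup together with the local finiteness of $D$ to force $M$ into one of the structured situations already analyzed, and then to upgrade the generic conclusions of Theorem \ref{thm:3.4} to the sharp ones claimed here. Since $D$ is locally finite, Lemma \ref{lem:3.12} applies to $M$, and as $M$ has no noncyclic free subgroup it yields an abelian normal subgroup $A\trianglelefteq M$ with $M/A$ locally finite; that is, $M$ is abelian-by-locally finite. I would then split the argument according to whether $A$ is central.

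First suppose $A\subseteq Z(M)$. Then $M/Z(M)$, being a quotient of $M/A$, is locally finite, and by [1, Lemma 3] the derived subgroup $M'$ is locally finite. As $M$ is non-abelian, Lemma \ref{lem:3.9} now applies and gives the full conclusions of Theorem \ref{thm:3.6}: in particular $[D:F]=p^2<\infty$, $D=F(M)$, and there is a maximal subfield $K$ with $K/F$ Galois, $\mathrm{Gal}(K/F)\cong M/K^*\cap G\cong\mathbb{Z}_p$, and $K^*\cap G$ the $FC$-center and Fitting subgroup of $M$. These statements contain everything asserted in the present theorem (with the finite simple group being $\mathbb{Z}_p$), so this case is complete.

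Now suppose $A\not\subseteq Z(M)$ and choose $\alpha\in A\setminus Z(M)$. Since $D$ is locally finite, $\alpha$ is algebraic over $F$, hence over $F(Z(M))$, so Corollary \ref{cor:3.5} applies: setting $K=F(A)$ and $H=C_D(K)$, conditions (i)--(iv) of Theorem \ref{thm:3.4} hold. In particular $F(M)=D$, $[K:F]=[D:H]<\infty$, $Z(H)=C_D(H)=K$, the extension $K/F$ is Galois with $M/H^*\cap G\cong\mathrm{Gal}(K/F)$ finite simple, and $K^*\cap G$ is the $FC$-center and Fitting subgroup of $M$. It then remains only to identify $H^*\cap G$ with $K^*\cap G$ and to deduce finite-dimensionality, which is exactly condition (iv).

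The decisive step, and the one I expect to be the main obstacle, is verifying $H^*\cap G\subseteq K$. Here I would invoke the freeness result just established: $H$ is again a locally finite division ring with center $Z(H)=K$ (using $[K:F]<\infty$ and the local finiteness of $D$, any finite subset of $H$ together with $K$ generates over $F$, hence over $K$, a finite-dimensional division subring), and $H^*\cap G=H^*\cap M$ is a subnormal subgroup of $H^*$, being the intersection of the subnormal subgroup $G$ of $D^*$ with $H^*$. Since $H^*\cap G\leq M$ contains no noncyclic free subgroup, Theorem \ref{thm:3.14} forces it to be central in $H$, i.e. $H^*\cap G\subseteq Z(H)=K$. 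This is precisely condition (iv), which then yields $H=K$, that $K$ is a maximal subfield of $D$, and $[D:F]<\infty$; consequently $H^*\cap G=K^*\cap G$ and the isomorphism of (iii) becomes $M/K^*\cap G\cong\mathrm{Gal}(K/F)$. Combined with (i)--(iii), this delivers all the required conclusions.
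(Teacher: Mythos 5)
Your proof is correct and follows essentially the same route as the paper's: Lemma \ref{lem:3.12} to get $M$ abelian-by-locally finite, the case $A\subseteq Z(M)$ handled by Lemma \ref{lem:3.9}, and the case $A\not\subseteq Z(M)$ handled by Corollary \ref{cor:3.5} plus Theorem \ref{thm:3.14} applied to the subnormal subgroup $H^*\cap G$ of $H^*$ to trigger condition (iv) of Theorem \ref{thm:3.4}. The only differences are cosmetic: the paper establishes $F(M)=D$ up front via Theorem \ref{thm:3.14} while you recover it inside each case, and you spell out the verification (which the paper leaves implicit) that $H$ is itself locally finite with center $K$ so that Theorem \ref{thm:3.14} may legitimately be applied to it.
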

\begin{proof} Firstly, we show that $F(M)=D$. Indeed, if $F(M)\neq D$, then by Stuth's theorem, $F(M)$ is not normalized by $G$. Since $M$ is a maximal of $G$, $F(M)\cap G=M$. Therefore,  $M$ is a non-abelian subnormal subgroup of $F(M)^*$  containing no a non-cyclic free subgroup, and this contradicts to \cite[Theorem 11]{hai-ngoc}. Thus, $F(M)=D$. This implies $Z(M)=M\cap F^*$. Now,  by \cite[Theorem 5]{hai-ngoc},  there exists a locally  abelian normal subgroup $A$ of $M$ such that $M/A$ is  finite. If $A\subseteq Z(M)$, then $M/Z(M)$ is finite, so is $M'$, and  the result follows from  Lemma~\ref{lem:3.9}. Assume that $A\not\subseteq Z(M)$. By  Corollary \ref{cor:3.5},  there exists  a subfield $K$ of $D$ such that $H=C_D(K)$, and $K$ satisfy the conditions $(i)-(iv)$ of Theorem \ref{thm:3.4}. Therefore, $H^*\cap G$ is a subgroup of $M$,  and clearly, it is  a subnormal subgroup of $H^*$. The condition  $H=C_D(K)$  implies $K\subseteq Z(H)$, where $Z(H)$ is the center of $H$. On the other hand, since $Z(H)=C_H(H)\subseteq C_D(H)=C_D(C_D(K))=K,$ it follows that $Z(H)=K$. Hence, by \cite[Theorem 11]{hai-ngoc},  $H^*\cap G\subseteq K$. The proof is now finished in view of  the condition $(iv)$ of Theorem \ref{thm:3.4}.
\end{proof}

\section{Maximal subgroups of ${\rm GL}_1(D)$}

In this section, we prove more precise results in the case $G=D^*$. 
 
\begin{theorem}\label{thm:4.1}
Let $D$ be a division ring with center $F$ and assume that $M$ is a maximal subgroup of $D^*$ containing a non-central $FC$-element $\alpha$. By setting $K=F(\alpha^M)$, the following conditions hold:

(i)  $K$ is a field, $[K:F]<\infty$, and $F[M]=D$.

(ii)  $K^*$ is the Hirsch-Plotkin radical of $M$.

(iii)  $F^* < K^* \leq C_D(K)^* < M < D^*$.

(iv)  $K/F$ is a Galois extension, $M/C_D(K)^* \cong \mathrm{Gal}(K/F)$ is a finite simple group.

(v)  For any $ x \in K,  |x^M|\leq [K:F]$,  and for any $ y \in D\setminus K,  |y^M|=|D|$.
\end{theorem}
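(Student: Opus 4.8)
The plan is to specialize Theorem~\ref{thm:3.4} to the case $G=D^*$ and then sharpen its conclusions using the extra room that $G=D^*$ provides. Most of the statement is essentially a corollary: taking $G=D^*$ in Theorem~\ref{thm:3.4}, the subgroup $H^*\cap G$ becomes simply $H^*=C_D(K)^*$, so conditions (i), (iii) and (iv) of Theorem~\ref{thm:3.4} immediately yield that $K$ is a field with $[K:F]<\infty$, that $F[M]=D$ (note $F(M)=F[M]$ once $[K:F]<\infty$ forces finite dimensionality along the relevant tower), that $K/F$ is Galois, and that $M/C_D(K)^*\cong\mathrm{Gal}(K/F)$ is a finite simple group. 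The chain in (iii) follows because $F^*<K^*$ (since $\alpha\in K\setminus F$), $K^*\le C_D(K)^*$ (as $K$ is commutative, $K\subseteq C_D(K)=H$), $C_D(K)^*<M$ (strict, since $M/C_D(K)^*$ is a nontrivial finite simple group), and $M<D^*$ by maximality and properness. For (v), the bound $|x^M|\le[K:F]$ for $x\in K$ comes from the fact that conjugates of $x$ under $M$ are roots of its minimal polynomial over $F$ (whose degree divides $[K:F]$), while for $y\in D\setminus K$ one invokes Theorem~\ref{thm:3.3} with $G=D^*$ to get $|y^M|=|y^{D^*}|=|D|$ — here I would need the observation that $K^*\cap G=K^*$ is precisely the $FC$-center, so any $y\notin K$ is a non-$FC$-element and its class is forced to have size $|D|$.

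The one genuinely new assertion is (ii): that $K^*$ is the \emph{Hirsch-Plotkin radical} of $M$, rather than merely the Fitting subgroup as delivered by Theorem~\ref{thm:3.4}(ii). The plan is to first note that Theorem~\ref{thm:3.4}(ii) (with $G=D^*$) already gives that $K^*=K^*\cap G$ is both the $FC$-center and the Fitting subgroup of $M$. Recall the Hirsch-Plotkin radical is the unique maximal locally nilpotent normal subgroup; since $K^*$ is abelian (hence locally nilpotent) and normal in $M$, it is contained in the Hirsch-Plotkin radical $\rho$. So I would prove the reverse inclusion $\rho\subseteq K^*$. The natural approach is to show $\rho$ is abelian: since $\rho\trianglelefteq M$ is locally nilpotent and lies in $D^*$, I would try to apply Lemma~\ref{lem:3.1a} — a locally nilpotent subnormal subgroup of $D^*$ must lie in $F$ — but $\rho$ is only subnormal in $M$, not a~priori in $D^*$, so the cleaner route is to mimic the Fitting-subgroup argument in the proof of Theorem~\ref{thm:3.4}(ii), replacing ``nilpotent'' by ``locally nilpotent'' throughout and checking that each step survives.

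The main obstacle I anticipate is exactly this upgrade from Fitting to Hirsch-Plotkin radical. In the proof of Theorem~\ref{thm:3.4}(ii), maximality of the nilpotent normal subgroup was used to build the ring $S=\sum z_iK$ and derive a contradiction from the simplicity of $M/H^*\cap G$ together with the finite-group classification [3, p.~114] for nilpotent linear groups. For a locally nilpotent $\rho$ strictly containing $K^*$, the same construction should go through: $\rho(H^*)/H^*$ is a nontrivial normal subgroup of the simple group $M/H^*$, forcing $M=\rho(H^*)$ and hence (using $K^*=H^*$ in the relevant finite-dimensional case) $M=\rho$, making $M$ itself locally nilpotent; but then Lemma~\ref{lem:3.1a} applied after showing $M$ is subnormal in $D^*$, or directly the classification of locally finite/locally nilpotent linear groups, contradicts the fact that $[M:Z(M)]=\infty$ (which holds by Lemma~\ref{lem:3.2}, since $M$ is non-abelian). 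I would therefore structure the proof of (ii) to parallel the Fitting argument, and the delicate point will be verifying that local nilpotence (rather than nilpotence) still yields $[M:Z(M)]<\infty$ via an appropriate linear-group result, thereby closing the contradiction and forcing $\rho=K^*$.
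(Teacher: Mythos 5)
Your overall plan---specialize Theorem \ref{thm:3.4} to $G=D^*$ and then upgrade the Fitting subgroup to the Hirsch-Plotkin radical by rerunning that argument with ``locally nilpotent'' in place of ``nilpotent''---is exactly the paper's strategy, and parts (i), (iii), (iv) do come out essentially as you say. But there are two genuine gaps. First, in (v): for $y\in D\setminus K$ your argument does not work. Theorem \ref{thm:3.3} with $G=D^*$ gives $|y^{D^*}|=|D|$, and since $y^M\subseteq y^{D^*}$ this bounds $|y^M|$ in the wrong direction; you cannot instead apply Theorem \ref{thm:3.3} to $M$ itself, because $M$ is not subnormal in $D^*$ (a subnormal maximal subgroup would be normal). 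The $FC$-center observation only tells you that $|y^M|$ is infinite, and only for $y\in M\setminus K$; it says nothing about $y\in D\setminus(M\cup K)$, and ``infinite'' is strictly weaker than ``$=|D|$'' when $D$ is uncountable. What the paper actually uses here is the extra leverage of $G=D^*$ that you left unexploited: $H^*=C_D(K)^*\leq M$, so for $y\notin K=C_D(H)$ the centralizer $C_H(y)$ is a \emph{proper} division subring of $H$, whence $|y^M|=[M:C_M(y)]\geq[H^*:C_H(y)^*]=|H|=|D|$ by the cardinality theorem [23, 14.2.1, p. 429] together with $[D:H]<\infty$.

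Second, in (ii): the early steps of your mimicking plan do survive (Lemma \ref{lem:3.1a} replaces Lemma \ref{lem:3.1} to give $B=H^*\cap A\subseteq K$, simplicity of $M/H^*$ forces $M=AH^*$, the ring $R=\sum_i y_iK$ equals $D$ by Stuth's theorem, so $K=H$ is a maximal subfield, $[D:F]=m^2$, and finally $M=A$ is locally nilpotent and absolutely irreducible). But your proposed endgame---a linear-group theorem giving $[M:Z(M)]<\infty$ so as to contradict Lemma \ref{lem:3.2}---is not available: for a locally nilpotent absolutely irreducible group one only gets that $M/Z(M)$ is \emph{locally finite} [24, 5.7.11, p. 215], not finite. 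The paper closes the argument by a different route: since $Z(M)=F^*\cap M$ (from $F[M]=D$) and $M/Z(M)$ is locally finite, every element of $K^*\leq M$ is radical over $F$, so $K/F$ is a nontrivial radical Galois extension; by [15, 15.13, p. 258] $F$ is then algebraic over a finite field, hence so is $D$ (as $[D:F]<\infty$), and Jacobson's theorem makes $D$ commutative, a contradiction. Your fallback of applying Lemma \ref{lem:3.1a} ``after showing $M$ is subnormal in $D^*$'' fails for the same reason as above: $M$ maximal and subnormal would force $M\trianglelefteq D^*$, which you have no grounds to assert. A smaller point: in (i), $F[M]=D$ does not follow from ``finite dimensionality along the tower'' ($[D:F]$ need not be finite in this theorem); it follows from the decomposition $D=\sum_i x_iH$ with $x_i\in M$ together with $H^*\leq M$, which puts every summand inside $F[M]$.
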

\begin{proof} We notice that $K$ and $H=C_D(K)$ satisfy the conditions $(i)-(iii)$ of Theorem~\ref{thm:3.4} (for $G=D^*$). As in the first paragraph of the proof of  Theorem \ref{thm:3.4}, we can write $D=\sum_{i=1}^tx_iH$, where $x_1,\ldots,x_t\in M$. In this case, we have $H^*\leq M$, and this  implies $F[M]=D$. Let $x$ be an element of $K^*$. Since $K$ is algebraic over $F$ and $K^*\vartriangleleft M$,  $x^M\subseteq K$ and the elements of $x^M$ have the same minimal polynomial $f$ over $F$. So, $|x^M|\leq \text{\rm deg}f\leq [K:F]$. For any $y\in D\setminus K$,  $C_H(y)$ is a proper division subring of $H$ since  $y\not\in K=C_D(H)$. By \cite[14.2.1, p. 429]{scott2} and the fact that $[D:H]_r<\infty$, we have $|y^M|=[M:C_M(y)]\geq [H^*:C_H(y)^*]=|H|=|D|$. Thus,  $(i), (iii), (iv)$ and $(v)$ hold.
\\\indent
It remains to  prove $(ii)$. Let $A$ be the Hirsch-Plotkin radical of $M$, and suppose  that $K^*<A$. Then, $B=H^*\cap A$ is a locally nilpotent normal subgroup of $H^*$. Hence, by Lemma \ref{lem:3.1a},  $B\subseteq Z(H)=K$, and consequently, $B=K^*$. If $A\subseteq H^*$, then $A=B=K^*$, a contradiction. Therefore, $A\not\subseteq H^*$. Thus, $AH^*/H^*$ is a  nontrivial normal subgroup of $M/H^*$. The simplicity of the group  $M/H^*$ implies $M/H^*=AH^*/H^*\cong A/B$. Assume that $R=\sum_{i=1}^my_iK$, where $\{y_1,\ldots ,y_m\}$ is a transversal of $B$ in $A$. Since $A$ normalizes $K^*$ and $B\subseteq K$,  $R$ is a division ring and $[R:K]_r\leq m$. Clearly, $R$ is a division ring generated by $A$ and $K$. Therefore, $M$ normalizes $R$. By the maximality of $M$ in $D^*$,  either $D^*$ normalizes $R$ or $R^*\leq M$. If the second case occurs, then $A$ is a locally nilpotent normal subgroup of $R^*$. By Lemma~ \ref{lem:3.1a}, $A$ is abelian, and  this contradicts to the fact that $K^*$ is the Fitting subgroup of $M$ ~ (see Theorem \ref{thm:3.4} $(ii)$). Thus, $D^*$ normalizes $R$, and by Stuth's theorem,  $R=D$. Therefore,  
$$[D:K]_r\leq m=|M/H^*|=|\mathrm{Gal}(K/F)|=[K:F].$$ 
This  implies  $[D:F]=m^2$, and $K=H$ is a maximal subfield of $D$. From the condition $M/H^*=AH^*/H^*$ and $K^*<A$, it follows that $M=A$. Now, since $F[M]=D$, $M$ is a locally nilpotent absolutely irreducible subgroup of $D^*$, and by \cite[5.7.11 p.~ 215]{wehrfritz}, $M/Z(M)$ is locally finite. Thus, $K/F$ is a nontrivial radical Galois extension. By \cite[15.13, p. 258]{lam}, $F$ is algebraic over a finite field, so is $D$. In view of  Jacobson's theorem \cite[ p. 219]{lam}, $D$ is a field, a contradiction. Thus, $K^*=A$ is the Hirsch-Plotkin radical of $M$.
\end{proof}

The proof of the following corollary is similar to the proof of Corollary \ref{cor:3.5} and it should be omitted. 

\begin{corollary}\label{cor:4.2}
Let $D$ be a division ring with center $F$ and suppose that $M$ is  a maximal subgroup of $D^*$. Assume that $M$ contains an abelian normal subgroup $A$ and there exists some element $\alpha\in A\setminus Z(M)$ such that $\alpha$ is algebraic over $F(Z(M))$. Then, $K=F[A]$ satisfies the conditions (i) - (v) in Theorem \ref{thm:4.1}.
\end{corollary}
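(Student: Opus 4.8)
The plan is to reduce the statement to Theorem \ref{thm:4.1} by first verifying that $\alpha$ is a non-central $FC$-element of $M$, exactly along the lines of the proof of Corollary \ref{cor:3.5}. Since $A\trianglelefteq M$ is abelian and $\alpha\in A$, every conjugate $\alpha^m$ ($m\in M$) again lies in $A$. The subgroup $Z(M)A$ of $D^*$ is abelian (elements of $Z(M)$ are central in $M$ and $A$ is abelian), so $L:=F(Z(M)A)$ is a field, and conjugation by each $m\in M$ restricts to an automorphism of $L$ fixing $F(Z(M))$ pointwise. Consequently every element of $\alpha^M$ is a root of the minimal polynomial of $\alpha$ over $F(Z(M))$, which has only finitely many roots because $\alpha$ is algebraic over $F(Z(M))$; hence $|\alpha^M|<\infty$, i.e. $\alpha$ is an $FC$-element of $M$. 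Moreover $\alpha\notin Z(M)$ forces $\alpha\notin F$, since any element of $F$ is central in $M$; thus $\alpha$ is non-central.

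With this in hand I would apply Theorem \ref{thm:4.1} to the element $\alpha$ and set $K'=F(\alpha^M)$, so that $K'$ satisfies (i)--(v). It then remains to identify $K'$ with $F[A]$. Because $A$ is abelian it is locally nilpotent, and being normal in $M$ it is contained in the Hirsch--Plotkin radical of $M$, which by Theorem \ref{thm:4.1}(ii) equals $K'^*$; hence $A\subseteq K'$. Conversely, normality of $A$ gives $\alpha^M\subseteq A$, so $F(\alpha^M)\subseteq F(A)$. Combining the two inclusions yields $K'=F(\alpha^M)=F(A)$. Finally, since $[K':F]<\infty$ by (i), the $F$-subalgebra $F[A]$ is a finite-dimensional domain, hence a division ring, so $F[A]=F(A)=K'$; therefore $K=F[A]$ satisfies (i)--(v).

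The argument is essentially routine given Theorem \ref{thm:4.1}, and the only points needing care are the two that drive the proof of Corollary \ref{cor:3.5}: that the conjugacy class $\alpha^M$ is finite (which rests on the commutativity of $Z(M)A$ and on conjugation acting as an $F(Z(M))$-automorphism of $L$), and the inclusion $A\subseteq K'$. I expect the latter to be the main, though minor, obstacle, since here one must invoke the Hirsch--Plotkin radical characterization in Theorem \ref{thm:4.1}(ii) in place of the Fitting/$FC$-center characterization used for general subnormal $G$ in Theorem \ref{thm:3.4}(ii); once $A\subseteq K'$ is secured, the equality $K=F[A]=F(\alpha^M)$ and the transfer of properties (i)--(v) are immediate.
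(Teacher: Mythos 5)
Your proposal is correct and takes essentially the same route as the paper, which disposes of this corollary by saying its proof is ``similar to that of Corollary \ref{cor:3.5}'': one shows $|\alpha^M|<\infty$ by noting that the conjugates of $\alpha$ lie in the field $F(Z(M)A)$ and share the minimal polynomial of $\alpha$ over $F(Z(M))$, applies Theorem \ref{thm:4.1} to $K'=F(\alpha^M)$, and then uses part (ii) (with the Hirsch--Plotkin radical replacing the Fitting subgroup of Theorem \ref{thm:3.4}(ii)) to get $A\subseteq K'$, whence $K'=F(A)=F[A]$ since $[K':F]<\infty$. Your write-up fills in exactly these steps, including the two points the paper leaves implicit, so there is nothing to correct.
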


\begin{lemma}\label{lem:4.3}
Let $D$ be a division ring with center $F$. Assume that $M$ is a maximal subgroup of $D^*$ such that $C_D(M)=F$ and $F[M]^*=M$. If $M'$ is algebraic over $F$, then $F[M']$ is an algebraic division $F$-algebra and $F[M']^*\trianglelefteq M$.
\end{lemma}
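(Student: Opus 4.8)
The statement splits into the normality assertion and the algebraic-division-algebra assertion, and I would settle the former first, since it uses neither algebraicity nor the division property. As $M'$ is the derived subgroup it is normal in $M$, and because $F=Z(D)$ is fixed by every inner automorphism, conjugation by any $x\in M$ carries $F[M']$ onto $F[xM'x^{-1}]=F[M']$; hence $M$ normalizes the ring $F[M']$ and therefore its unit group $F[M']^{*}$. Since $F[M']\subseteq F[M]$, any $u\in F[M']^{*}$ has $u,u^{-1}\in F[M]$, so $u\in F[M]^{*}=M$; thus $F[M']^{*}$ is a subgroup of $M$ normalized by $M$, that is $F[M']^{*}\trianglelefteq M$. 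Note this already gives $M'\subseteq F[M']^{*}$, so that $M/F[M']^{*}$ is abelian.

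For the other assertion I would first reduce ``algebraic division $F$-algebra'' to a single claim: any subring $R$ of $D$ with $F\subseteq R$ that is algebraic over $F$ is automatically a division ring. Indeed, if $0\neq a\in R$ and $c_na^n+\cdots+c_1a+c_0=0$ is a relation over $F$ of least degree, then $c_0\neq 0$ (otherwise one cancels the domain element $a$ to lower the degree), so $a^{-1}=-c_0^{-1}(c_na^{n-1}+\cdots+c_1)\in R$. Hence it suffices to prove that $F[M']$ is algebraic over $F$; the division property, and the equality $F[M']=F(M')$, then come for free.

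The real work is upgrading algebraicity of the group $M'$ to algebraicity of the algebra $F[M']$, because a product of two individually algebraic elements need not be algebraic, so the maximality hypotheses must be brought in. Writing $E=F(M')$ for the division subring generated by $M'$, normality of $M'$ shows $M$ normalizes $E$, so by maximality $N_{D^{*}}(E)$ is either $M$ or $D^{*}$. In the latter case Stuth's theorem gives $E\subseteq F$, whence $F[M']=F$ is trivially an algebraic division $F$-algebra, or else $E=D$; in the former case $E^{*}\subseteq M$, so $E\neq D$ and $E^{*}\trianglelefteq M$. Everything thus reduces to showing $F[M']$ is algebraic over $F$, and this is the step I expect to be the main obstacle. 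My plan is to exploit $C_D(M)=F$ together with the generalized Wedderburn factorization (Corollaries \ref{cor:2.3} and \ref{cor:2.4}), which expresses the $M$-conjugates of an element of $M'$ as roots of a fixed minimal polynomial over $F$, in order to confine $F[M']$ to a centrally finite, hence finite-dimensional and algebraic, division subring. The delicate point is obtaining control of the degrees that is uniform over all of $M'$, rather than element by element, so that every element of the generated algebra, and not merely each element of the group, is algebraic. Once $F[M']$ is known to be algebraic it is a division ring by the reduction above, and combined with the first paragraph this establishes both conclusions.
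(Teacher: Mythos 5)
There is a genuine gap: the entire content of the lemma is the algebraicity of the algebra $F[M']$, and your proposal does not prove it --- paragraph 3 is an announced plan, not an argument, and the plan itself is misdirected. Corollary \ref{cor:2.4} cannot be invoked here, since its hypothesis is $D^*\trianglelefteq M$, which fails for a proper maximal subgroup $M$ of $D^*$; and Theorem \ref{thm:2.2}/Corollary \ref{cor:2.3} (applied with $R=D$, $K=C_D(M)=F$) only produce roots in $x^{MD^*}=x^{D^*}$, which would give $x^n\in[D^*,x]F^*$ --- useless for confining anything to $M'$. Moreover, your stated goal of trapping $F[M']$ inside a \emph{centrally finite} division subring is strictly stronger than the lemma's conclusion (which asserts algebraicity, possibly of infinite dimension over $F$), and no uniform degree bound over $M'$ exists or is needed; seeking one is a dead end.

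The missing idea, which is the heart of the paper's proof, is to rerun the Wedderburn factorization with conjugators confined to $M$, and the hypotheses $F[M]^*=M$ and algebraicity of $M'$ are exactly what make this possible. One first shows by induction: if $x\in M$ and $g(t)=(t-r_n)\cdots(t-r_1)$ with all $r_i\in x^M$, then $g(x)\in M\cup\{0\}$. Indeed, writing $g(x)=(x^{h(x)}-r_n)h(x)$ with $h(x)\in M$ and $r_n=x^m$, one has $x^{h(x)}-r_n=([m^{-1}h(x),x]-1)^m x^m$, and $[m^{-1}h(x),x]\in M'$ is algebraic over $F$, so $[m^{-1}h(x),x]-1$ is algebraic, hence either $0$ or invertible with inverse in $F[M]$, i.e.\ in $F[M]^*=M$. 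This claim forces the factorization of the minimal polynomial of any algebraic $x\in M$ to have all roots in $x^M$ (not merely $x^{D^*}$), and then the computation of Corollary \ref{cor:2.4} yields $x^n\in M'F^*$. Consequently every algebraic element of $M$ has finite order in the abelian group $M/M'F^*$, so a product $xy$ of algebraic elements satisfies $(xy)^k\in M'F^*$, whose elements are algebraic; hence $xy$ is algebraic. Closure under addition then follows from $x+y=x(1+x^{-1}y)$, since $1+x^{-1}y$ lies in the commutative algebraic ring $F[x^{-1}y]$ and so is $0$ or an element of $F[M]^*=M$. This is what makes every element of $F[M']$ (an $F$-linear combination of products of elements of $M'$) algebraic; your paragraphs 1 and 2 (normality of $F[M']^*$ and the standard ``algebraic subalgebra of a division ring is a division ring'' reduction) are correct but cover only the routine part.
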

\begin{proof} Firstly, we claim that if $x\in M$ and $g(t)=(t-r_n)\cdots (t-r_1)\in D[t]$ with $r_n,\ldots ,r_1\in x^M$, then $g(x)\in M\cup \{0\}$. Consider $h(t)=(t-r_{n-1})\cdots (t-r_1)$. By induction, we have $h(x)\in M\cup \{0\}$. If $h(x)=0$, then $g(x)=0$ as claimed. If $h(x)\neq~ 0$, then, by \cite[(16.3), p. 263]{lam}, we have $g(x)=(x^{h(x)}-r_n)h(x)$. Take $r_n=x^m$ with $m\in M$, we get  
$$x^{h(x)}-r_n=(x^{m^{-1}h(x)}-x)^m=([m^{-1}h(x),x]-1)^mx^m.$$
On the other hand,  since $M'$ is algebraic over $F$ and $F[M]^*=M$,  it follows that $[m^{-1}h(x),x]-1\in M$. Therefore, $x^{h(x)}-r_n\in M$, so $g(x)\in M$.
\\\indent
Next, by the same argument as in the proof of Theorem \ref{thm:2.2} and by what we have proved, if $x\in M$ is algebraic over $F$ with minimal polynomial $f(t)$ of degree $n$,  then there exist $x_1,\ldots,x_{n-1}\in x^M$ such that $$f(t)=(t-x_{n-1})\cdots (t-x_1)(t-x)\in D[t].$$ Also, by the same argument as in the proof of Corollary \ref{cor:2.4},  we have $x^n\in M'F^*$. Thus,  if $x,y\in M$ are algebraic over $F$, then $xM'F^*$ and $yM'F^*$ are two elements of $M/M'F^*$ of finite order. Therefore, $xyM'F^*$ is of finite order too. By the  supposition, $M'$ is algebraic over $F$. So, $xy$ is also algebraic over $F$. Assume that $x,y\in M$ are algebraic over $F$. Since $F[M]^*=M$, $x+y=x(1+x^{-1}y)\in M$ is algebraic over $F$. Therefore,  $F[M']$ is an algebraic division $F$-algebra, and $F[M']^*\subseteq F[M]^*=M$. This completes our proof.
\end{proof}

In the next theorem, we get some result as in \cite[Theorem 6]{maxn}, but with a weaker condition. In fact, we replace the condition of algebraicity of $M$ by the condition of algebraicity of derived subgroup $M'$.

\begin{theorem}\label{thm:4.4}
Let $D$ be a division ring with center $F$. Assume that $M$ is a non-abelian locally soluble maximal subgroup of $D^*$ such that $M'$ is algebraic over $F$. Then, $[D:F]=p^2$, $M/M'F^*\cong\bigoplus_{i\in I}\mathbb{Z}_p$, where $p$ is a prime number, and there exists a maximal subfield $K$ of $D$ such that $K/F$ is a Galois extension, $\mathrm{Gal}(K/F)\cong M/K^*\cong \mathbb{Z}_p$, $K^*$ is the $FC$-center and the Hirsch-Plotkin radical of $M$. Furthermore, for any $x\in M\setminus K^*$, we have $x^p\in F,\ M=\bigcup_{i=1}^pK^*x^i$ and $D=F[M]=\bigoplus_{i=1}^pKx^i$.
\end{theorem}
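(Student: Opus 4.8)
The plan is to reduce the problem to the metabelian case and then read off the precise $G=D^*$ data from Theorem~\ref{thm:4.1}. I would start from the dichotomy forced by maximality: since $M\leq F[M]^*\leq D^*$, either $F[M]=D$ or $F[M]^*=M$. In the first case $C_D(M)=C_D(D)=F$ automatically; in the second I would check $C_D(M)=F$ as well (otherwise $M\,C_D(M)^*$ is a subgroup properly containing $M$, hence equals $D^*$, making $M$ normal in $D^*$, which one rules out for a non-abelian locally solvable maximal subgroup). Having $C_D(M)=F$ and $F[M]^*=M$, Lemma~\ref{lem:4.3} applies and tells us that $F[M']$ is an algebraic division $F$-algebra with $F[M']^*\trianglelefteq M$; this is the device that lets me treat the subring generated by $M'$ as a division ring even when $M$ is not absolutely irreducible.

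The crux is to produce a non-central abelian normal subgroup $A$ of $M$ and to establish $[D:F]<\infty$. Since $M$ is non-abelian it is not nilpotent by Theorem~\ref{thm:3.7}, and I would extract $A$ from the algebraic division algebra $F[M']$: as $M$ normalizes the characteristic subfield $Z(F[M'])$, a suitable $A\trianglelefteq M$ that is abelian and algebraic over $F$ arises, and the algebraicity of $M'$ guarantees $A$ has an element algebraic over $F(Z(M))$. Then Corollary~\ref{cor:4.2} yields a subfield $K=F[A]$ with $H=C_D(K)$ satisfying (i)--(v) of Theorem~\ref{thm:4.1}. To obtain $[D:F]<\infty$ I would run the Ore-domain/crossed-product argument of Theorem~\ref{thm:3.6}: no element of $M$ can be transcendental over $F(A)$, and this forces finite dimensionality.

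With $[D:F]<\infty$, the division ring $H=C_D(K)$ is centrally finite and $H^*\leq M$ is locally solvable, hence contains no noncyclic free subgroup; by Gon\c{c}alves's theorem for centrally finite division rings \cite{Gon1}, $H$ must be commutative, so $H=K$ is a maximal subfield of $D$. Theorem~\ref{thm:4.1}(iv) then gives $\mathrm{Gal}(K/F)\cong M/K^*$ simple, and since $M$ is locally solvable this finite simple group is cyclic, so $\cong\mathbb{Z}_p$ and $[D:F]=[K:F]^2=p^2$. In particular $M'\leq K^*$, so $M'$ is abelian. The remaining assertions follow as in Theorem~\ref{thm:3.6}: $K^*$ is the $FC$-center and the Hirsch--Plotkin radical of $M$ by Theorem~\ref{thm:4.1}(ii),(v); for $x\in M\setminus K^*$, primality of $[M:K^*]=p$ forces $C_M(x^p)=M$ and hence $x^p\in F$, giving $M=\bigcup_{i=1}^pK^*x^i$ and $D=\bigoplus_{i=1}^pKx^i$; Corollary~\ref{cor:2.5} gives $y^p\in M'F^*$ for $y\in K^*$, so $M/M'F^*$ has exponent $p$, and Baer--Pr\"ufer's theorem yields $M/M'F^*\cong\bigoplus_{i\in I}\mathbb{Z}_p$.

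The main obstacle is the second paragraph: simultaneously producing a genuinely non-central abelian normal subgroup and establishing $[D:F]<\infty$ without yet knowing that $M'$ is abelian. Unlike Lemma~\ref{lem:3.9}, where $M'$ locally finite fits directly into Lemma~\ref{lem:3.1}, here local solvability alone does not place $M'$ inside the center, so the algebraicity of $M'$ must be leveraged to descend to the centrally finite setting in which Gon\c{c}alves's free-subgroup theorem applies. Verifying that the abelian normal subgroup $A$ is non-central (so that Corollary~\ref{cor:4.2} actually triggers) is the most delicate point; once centrally finiteness is in hand, local solvability cleanly forbids free subgroups and non-abelian simple sections, and the whole structure falls out.
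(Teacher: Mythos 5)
Your proposal has a genuine gap at its center: you never handle the absolutely irreducible case $F[M]=D$, and that is the only case that actually occurs. Lemma \ref{lem:4.3} applies only under the hypothesis $F[M]^*=M$; when $F[M]=D$ nothing forces $F[M']$ to be a division ring (the unit argument inside Lemma \ref{lem:4.3} uses $F[M]^*=M$ essentially, and algebraic elements of a division ring are not in general closed under sums), so your second paragraph --- extracting an abelian normal subgroup $A$ from $Z(F[M'])$ and then running the crossed-product argument --- has nothing to run on. Moreover, the one case where your machinery does apply is vacuous: if $F[M]^*=M$, then by Lemma \ref{lem:4.3} the group $F[M']^*\le M$ is the locally solvable multiplicative group of a division ring, hence $M'$ is abelian (Remark 2 of \cite{nilpotent}), $M$ is metabelian, and Theorem \ref{thm:3.6} then yields $F[M]=D$, a contradiction. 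That is precisely how the paper uses Lemma \ref{lem:4.3}: only to eliminate the possibility $F[M]\neq D$. For the surviving case the paper invokes Corollary 4 of \cite{ls} (Wehrfritz): an absolutely irreducible locally solvable subgroup of $D^*$ is abelian-by-locally finite. This is the engine that produces the abelian normal subgroup $A$ with $M/A$ locally finite, and you have no substitute for it. The paper then splits according to whether $A\cap M'\subseteq F$: if not, Corollary \ref{cor:4.2}, Lemma \ref{lem:3.1} and the Double Centralizer theorem give $[D:F]<\infty$; if so, one shows either $M'$ is abelian (so Theorem \ref{thm:3.6} applies) or $F(M')=D$ with $M'/M'\cap F$ locally finite and $M'$ algebraic, whence $D$ is locally finite over $F$, and Theorem \ref{thm:3.15} finishes. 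Notice that this case split is exactly the answer to the difficulty you flagged as ``the most delicate point'': the paper does not force $A$ to be non-central; when $A$ meets $M'$ only in central elements it argues via local finiteness instead.

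Two secondary defects. First, your crossed-product step does not transfer: in Theorem \ref{thm:3.6} the fact that $F(N)=D$ for every subgroup $N$ properly containing $A$ rests on $M'\le A$ (which makes $N\trianglelefteq M$), whereas your $A\subseteq Z(F[M'])$ has no reason to contain $M'$ before $M'$ is known to be abelian. Second, your verification of $C_D(M)=F$ in the case $F[M]^*=M$ is incomplete: if $C_D(M)^*\le M$, then $C_D(M)$ is a field that could a priori be strictly larger than $F$, and your normality argument says nothing about this subcase; the paper avoids the issue by first proving $F(M)=D$ (again via Remark 2 of \cite{nilpotent}), which gives $C_D(M)=F$ at once. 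Your endgame --- Gon\c{c}alves's theorem for the centrally finite $H=C_D(K)$, simplicity plus local solvability forcing $M/K^*\cong\mathbb{Z}_p$, then $x^p\in F$, Corollary \ref{cor:2.5} and Baer--Pr\"ufer --- is sound and agrees with the paper's, but it cannot be reached without the missing middle.
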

\begin{proof} If $F(M)\neq D$, then by the maximality of $M$, $M\cup \{0\}=F(M)$ is a division ring. By Remark 2 in \cite{nilpotent}, $M$ is abelian, a contradiction. Hence, $F(M)=D$, and $C_D(M)=F$. Suppose that $F[M]\neq D$. Then, $F[M]^*=M$ by the maximality of $M$. By Lemma \ref{lem:4.3}, $F[M']$ is a division ring whose  multiplicative group is locally soluble.  In view of  Remark 2 in \cite{nilpotent},  $M'$ is abelian. So, by Theorem \ref{thm:3.6}, we have $F[M]=D$, a contradiction. Therefore,  $M$ is absolutely irreducible, and by  \cite[Corollary 4]{ls},  $M$ is abelian-by-locally finite. Thus,  there exists an abelian normal subgroup $A$ of $M$ such that $M/A$ is locally finite. We need to show that $D$ is locally finite over $F$. Now, let us examine two possible cases.
\\[5pt]\textit{Case 1}: $A\cap M'\subseteq F$
\\\indent
We note that $F(M')^*$ is normalized by $M$. By the maximality of $M$,  either $F(M')\cap G\subseteq M$ or $G$ normalizes $F(M')^*$. In the first case, $M'$ is a subnormal subgroup of $F(M')^*$. On the other hand, $M$ is abelian-by-locally finite, so is $M'$. By Lemma~\ref{lem:3.1}, $M'$ is abelian.  Hence, by Theorem~\ref{thm:3.6}, $[D:F]<\infty$, as claimed. In the second case, $G$ normalizes $F(M')^*$, and by Stuth's theorem,  either $F(M')\subseteq F$ or $F(M')=D$. If $F(M')\subseteq F$, then, by Theorem \ref{thm:3.6}, $[D:F]<\infty$, and we are done. Suppose that $F(M')=D$. We know that $M'/M'\cap A\cong AM'/A\leq M/A$ is locally finite, so $M'/M'\cap F$ is locally finite since $M'\cap A\subseteq F$. Therefore,  $D=F(M')$ is locally finite over $F$.
\\[5pt]\textit{Case 2}: $A\cap M'\not\subseteq F$
\\\indent
Since $M'$ is algebraic over $F$, there exists an element $x\in A\cap M'\setminus F$ which is algebraic over $F$. By Corollary \ref{cor:4.2},  there exists a subfield $K_1$ of $D$ such that  $K_1$ satisfies the conditions $(i)-(v)$ in Theorem \ref{thm:4.1}. So, $C_D(K_1)^*\leq M$ is abelian-by-locally finite, and by Lemma \ref{lem:3.1},  $C_D(K_1)^*$ is abelian. Now,  by Double Centralizer theorem,  $[D:F]<\infty$. Therefore, $D$ is locally finite. Since $M$ is locally soluble, $M$ contains no non-cyclic free subgroups. By Theorem \ref{thm:3.15},  there exists a maximal subfield $K$ of $D$ as in Theorem \ref{thm:3.15}. Then,  $M/K^*$ is a finite simple group. Recall that, by the supposition, $M$ is locally soluble. Hence, $M/K^*\cong\mathbb{Z}_p$, where $p$ is a prime number. Thus,  $M'\leq K^*$ and $M$ is metabelian. Now, the proof is complete by applying of theorems~\ref{thm:3.6} and \ref{thm:4.1}. 
\end{proof}

Using the results obtained above in this section, we can now improve one previous result by B. X. Hai and N. V. Thin. In fact, in \cite[Theorem 3.2]{hai-thin1},  it was proved that if $D$ is a non-commutative division ring which is algebraic over its center, then any locally nilpotent maximal subgroup of $D^*$ is the multiplicative group of some maximal subfield of $D$. Here, we are now ready to improve this result by the following theorem.

\begin{theorem}\label{thm:4.5}
Let $D$ be a non-commutative division ring with center $F$ and suppose $M$ is  a locally nilpotent maximal subgroup of $D^*$. If $M'$ is algebraic over $F$, then $M$ is abelian. Consequently, $M$ is the multiplicative group of some maximal subfield of $D$.
\end{theorem}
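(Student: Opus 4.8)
The plan is to treat the two assertions in turn, the first by contradiction and the second by a direct centralizer computation. First I would suppose $M$ is non-abelian. Since $M$ is locally nilpotent it is locally solvable, and by hypothesis $M'$ is algebraic over $F$, so every hypothesis of Theorem~\ref{thm:4.4} is satisfied. Applying that theorem yields a maximal subfield $K$ of $D$ with $\mathrm{Gal}(K/F)\cong M/K^*\cong\mathbb{Z}_p$ for a prime $p$, and moreover asserts that $K^*$ is the Hirsch-Plotkin radical of $M$.

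The key observation is that a locally nilpotent group coincides with its own Hirsch-Plotkin radical: $M$ is a locally nilpotent normal subgroup of itself, hence is contained in its (unique maximal locally nilpotent normal) Hirsch-Plotkin radical, which in turn lies in $M$. Therefore $K^*=M$. This is contradictory, since $K^*$ is abelian as the multiplicative group of a field whereas $M$ was assumed non-abelian (equivalently, $M/K^*\cong\mathbb{Z}_p$ would be trivial). Hence $M$ is abelian.

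For the stated consequence I would put $L=C_D(M)$, a division subring containing $F$, with $M\subseteq L^*$ because $M$ is abelian. As $M$ centralizes, hence normalizes, $L^*$, maximality of $M$ in $D^*$ gives the usual dichotomy: either $D^*$ normalizes $L^*$, or $L^*\le M$. In the first case Stuth's theorem forces $L\subseteq F$ or $L=D$, and each puts $M$ inside $F^*$, whence $M=F^*$ by maximality. I would exclude this by noting that $F^*$ is never maximal in $D^*$ when $D$ is non-commutative: for any $a\in D\setminus F$ the abelian subgroup $\langle F^*,a\rangle$ satisfies $F^*<\langle F^*,a\rangle<D^*$. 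Thus $L^*\le M$, so $M=L^*$; then $L$ is a field (its unit group $M$ being abelian) with $C_D(L)=C_D(M)=L$, hence a self-centralizing, i.e.\ maximal, subfield of $D$. The only delicate point is ruling out the central case $M=F^*$; the rest is a straightforward application of Theorem~\ref{thm:4.4}, the Hirsch-Plotkin radical identity, and Stuth's theorem.
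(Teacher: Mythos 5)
Your proof is correct and follows essentially the same route as the paper: both deduce from Theorem \ref{thm:4.4} that if $M$ were non-abelian its Hirsch-Plotkin radical $K^*$ would be a proper subgroup (since $M/K^*\cong\mathbb{Z}_p$), contradicting the fact that a locally nilpotent group coincides with its own Hirsch-Plotkin radical. The only difference is that you spell out the final assertion, which the paper dismisses as ``evident,'' via the centralizer $L=C_D(M)$, Stuth's theorem, and the non-maximality of $F^*$; that argument is also correct.
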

\begin{proof} Assume that $M$ is non-abelian. Using Theorem \ref{thm:4.4}, we conclude that the Hirsch-Plotkin radical of $M$ is a proper subgroup of $M$, this contradicts to the fact that $M$ is locally nilpotent. Thus, $M$ is abelian. The last conclusion is evident.
\end{proof}

\noindent
{\bf Acknowledgments}

The authors thank the Editor and the referee for their  useful remarks and comments. 
The first author is funded by Vietnam National Foundation for Science and Technology Development (NAFOSTED) under Grant No. 101.04-2013.01. A part of the first revised version of this work was done when he was working as a researcher at the Vietnam Institute for Advanced Study in Mathematics (VIASM).  He would like to express his sincere thanks to VIASM for providing a fruitful research environment and hospitality. 


\end{document}